\newcommand{\compl}[1]{#1^c} %set complement
\renewcommand{\div}{\mathrm{div}\ } %divergence operator, overwrites division symbol
\renewcommand{\epsilon}{\varepsilon}
\let\foralltemp\forall
\renewcommand{\forall}{\foralltemp\, }
\DeclareDocumentCommand{\integral}{ O{} O{} m O{x}}{\int_{#1}^{#2} #3\, \mathrm{d}#4} %needs \usepackage{xparse}
\newcommand{\Natural}{\mathbb{N}}
\newcommand{\norm}[2][]{\| #2 \|_{#1}} %norm (with optional subscript - for function spaces maybe use special commands?)
\newcommand{\Real}{\mathbb{R}} %set of real numbers
\newcommand{\set}[2]{\left\{ #1 : #2 \right\}} %set builder notation
\newcommand{\setdiff}{\backslash} %difference of sets
\newcommand{\simpleset}[1]{\left\{ #1 \right\}} %definition of set, no 2nd part for conditions
\newcommand{\unitsphere}[1]{\mathbb{S}^{#1-1}}
\newcounter{object}[section]
\renewcommand{\theobject}{\arabic{section}.\arabic{object}}
\newenvironment{definition}[1][]{\rmfamily \medskip%
			  \refstepcounter{object}%
			  \noindent \textbf{Definition \theobject%
			    \ifthenelse{\equal{#1}{}}{.}{\ (#1).}\ }}{\medskip}
\newenvironment{corollary}[1][]{\medskip%
			 \refstepcounter{object}% 
			 \noindent \textbf{Corollary \theobject%
			 \ifthenelse{\equal{#1}{}}{.}{\ (#1).}\ }\itshape }{\medskip}
\newenvironment{example}{\rmfamily \medskip%
			  \refstepcounter{object}%
			  \noindent \textbf{Example \theobject.}\ }{\hfill $\blacktriangle$\medskip}
\newenvironment{lemma}{\rmfamily \medskip%
			  \refstepcounter{object}%
			  \noindent \textbf{Lemma \theobject .}\ \itshape}{\medskip}
\newenvironment{proposition}[1][]{\rmfamily \medskip%
			  \refstepcounter{object}%
			  \noindent \textbf{Proposition \theobject%
			 \ifthenelse{\equal{#1}{}}{.}{\ (#1).}\ }\itshape }{\medskip}
\newenvironment{proof}[1][]{
  \noindent \textit{Proof\ifthenelse{\equal{#1}{}}{}{ of #1}.}\ }{\hfill $\blacksquare$ \medskip}
\newenvironment{remark}{\rmfamily \medskip%
			  \refstepcounter{object}%
			  \noindent \textbf{Remark \theobject.}\ }{\hfill $\blacktriangle$\medskip}
\newenvironment{theorem}[1][]{\medskip%
			 \refstepcounter{object}% 
			 \noindent \textbf{Theorem \theobject%
			 \ifthenelse{\equal{#1}{}}{.}{\ (#1).}\ }\itshape }{\medskip}
\newcommand{\curlyE}{\mathcal E}
\newcommand{\Per}{\mathrm{Per}}
\title{The anisotropic fractional isoperimetric problem with respect to unconditional unit balls}
\author{Andreas Kreuml}
\date{}
\begin{document}

\maketitle

\begin{abstract}
  The minimizers of the anisotropic fractional isoperimetric inequality with respect to the convex body $K$ in $\Real^n$ are shown to be equivalent to star bodies whenever $K$ is strictly convex and unconditional.
  From this a P\'olya-Szeg{\H o} principle for anisotropic fractional seminorms is derived by using symmetrization with respect to star bodies.

\end{abstract}

\section{Introduction}

Let $K \subset \Real^n$ be an origin-symmetric convex body and $s \in (0,1)$.
The anisotropic fractional $s$-perimeter was defined by Ludwig \cite{ludwig-perimeter} for Borel sets $E \subseteq \Real^n$ by
\begin{equation*}
	P_s(E,K) := \integral[E]{
		\integral[\compl E]{
			\frac 1 {\norm[K]{x-y}^{n+s}}
		}[y]
	},
\end{equation*}
where $\compl E$ is the complement of $E$ in $\Real^n$ and $\norm[K]{\cdot}$ is the norm on $\Real^n$ with closed unit ball $K$, i.e. $K = \set{x \in \Real^n}{\norm[K]{x} \le 1}$.
Here, we say that a set $K \subset \Real^n$ is a convex body if it is compact, convex, and it has non-empty interior.
The isotropic case, that is, $K = B$ is the Euclidean unit ball, leads to the (Euclidean) fractional perimeter (denoted by $P_s(E)$) which is closely connected to the theory of fractional Sobolev spaces and has been extensively studied over the last two decades (see \cite{ambrosio-gamma, bbmlimiting, bucurvaldinoci, crs, dicastro, frankseiringer, hurri, kreumlmord, xiao} and the references therein).
In particular, for bounded Borel sets $E \subset \Real^n$ the fractional isoperimetric inequality
\begin{equation*}
  P_s(E) \ge \gamma_{n,s}|E|^{\frac{n-s}{n}}
\end{equation*}
holds with sharp constant $\gamma_{n,s} > 0$ and there is equality precisely for sets equivalent to Euclidean balls (see \cite{frankseiringer}).

Anisotropic fractional perimeters share many properties with Euclidean fractional perimeters (see Section \ref{sec:perimeters}) and also fulfil an isoperimetric inequality,
\begin{equation}
  \label{eq:anfracisoineq}
  P_s(E,K) \ge \gamma_{n,s}(K)|E|^{\frac{n-s}{n}},
\end{equation}
where $E \subset \Real^n$ is a bounded Borel set and $\gamma_{n,s}(K) > 0$ is the optimal constant.
From the Fr\'echet-Kolmogorov compactness criterion it can be deduced that there exists a minimizer of $\eqref{eq:anfracisoineq}$.
However, if $K \neq B$, little is known about the value of $\gamma_{n,s}(K)$ and the equality cases.
By a result of Ludwig \cite{ludwig-perimeter}, the minimizers of \eqref{eq:anfracisoineq} are in general not homothetic to the unit ball $K$ which is a striking difference to the anisotropic isoperimetric inequality (cf. \cite{taylor})
\begin{equation*}
  P(E,K) \ge n|K|^{\frac 1 n}|E|^{\frac{n-1}n}
\end{equation*} 
for Borel sets $E \subset \Real^n$ with $|E| < \infty$,
where all minimizers are homothetic to $K$ up to sets of measure 0.
Here, $P(E,K)$ is the anisotropic perimeter with respect to $K$ as defined in Section \ref{sec:perimeters}.
%In this article we study the equivalent formulation of \eqref{eq:anfracisoineq} as anisotropic fractional isoperimetric problem,
%\begin{equation}
	%\label{eq:anfracisoprob}
	%\inf \set{P_s(E,K)|E|^{-\frac{n-s}n}}{E \subset \Real^n \text{ bounded}, |E| > 0}.
%\end{equation}

In our first main theorem we show that under certain symmetry assumptions on $K$ all minimizers must be star-shaped.

\begin{theorem}
  \label{th:introshape}
	  Let $K \subset \Real^n$ be an unconditional strictly convex body.
	  Then every minimizer $M \subset \Real^n$ of the anisotropic fractional isoperimetric inequality \eqref{eq:anfracisoineq} is up to translation equivalent to an unconditional star body.
	%Let $K \subset \Real^n$ be a unconditional strictly convex body.
	%Then every minimizer of \eqref{eq:anfracisoprob} is starshaped.\\
\end{theorem}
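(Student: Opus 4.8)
The plan is to use the unconditional symmetry of $K$ by \emph{polarizing} a minimizer with respect to translates of the coordinate hyperplanes. For $i \in \{1,\dots,n\}$ and $t \in \Real$ let $H_{i,t} = \set{x \in \Real^n}{x_i = t}$, let $\sigma = \sigma_{i,t}$ be the reflection in $H_{i,t}$, and let $H^+ = \{x_i > t\}$, $H^- = \{x_i < t\}$. For a Borel set $E$ define the polarization $E^\sigma$ by $E^\sigma \cap H^+ = (E \cup \sigma E) \cap H^+$ and $E^\sigma \cap H^- = (E \cap \sigma E) \cap H^-$; then $|E^\sigma| = |E|$. Starting from
\begin{equation*}
	2\,P_s(E,K) = \integral[\Real^n]{\integral[\Real^n]{\frac{|\indicator{E}(x) - \indicator{E}(y)|}{\norm[K]{x-y}^{n+s}}}[y]},
\end{equation*}
which uses $K = -K$, the first step is the polarization inequality $P_s(E^\sigma,K) \le P_s(E,K)$. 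This rests on two elementary properties of $\norm[K]{\cdot}$: first, $\norm[K]{\sigma x - \sigma y} = \norm[K]{x-y}$ for all $x,y$, because $\sigma x - \sigma y$ is $x-y$ with its $i$-th coordinate sign-changed and $K$ is unconditional; second, $\norm[K]{x - \sigma y} \ge \norm[K]{x-y}$ whenever $x,y \in H^+$, because the $i$-th coordinate of $x-\sigma y$ equals $x_i + y_i - 2t$, whose modulus strictly exceeds $|x_i - y_i|$, while the remaining coordinates of the difference are unchanged and the gauge of an unconditional body is non-decreasing in the modulus of each coordinate. Splitting the double integral over $H^\pm \times H^\pm$, substituting $\sigma$ on the $H^-$ factors, and applying an elementary pointwise inequality for the $\{0,1\}$-valued symbols then gives the claim. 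Since $K$ is \emph{strictly} convex, the gauge is in fact strictly increasing in the modulus of each coordinate (the relative interior of a chord of $K$ lies in the interior of $K$), so $\norm[K]{x - \sigma y} > \norm[K]{x-y}$ for all $x,y \in H^+$; hence equality in the polarization inequality forces the dichotomy $E^\sigma = E$ or $E^\sigma = \sigma E$ up to a null set.

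Now let $M$ be a minimizer of \eqref{eq:anfracisoineq}. Since $|M^\sigma| = |M|$ and $P_s(M^\sigma,K) \le P_s(M,K)$, also $M^\sigma$ is a minimizer, so for every $i$, every $t$ and every choice of half-space we get $M^\sigma \in \{M,\sigma M\}$ up to a null set. Fix $i$ and let $t_i$ be a median of the $i$-th coordinate over $M$, so $|M \cap \{x_i > t_i\}| = |M|/2$ (the map $t \mapsto |M \cap \{x_i>t\}|$ is continuous as hyperplanes are null). Comparing measures discards one branch of the dichotomy for $t > t_i$ and the other for $t < t_i$: for $t > t_i$ one must have $\sigma_{i,t}(M \cap \{x_i > t\}) \subseteq M$, and for $t < t_i$ one must have $\sigma_{i,t}(M \cap \{x_i < t\}) \subseteq M$, both up to a null set. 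Letting $t$ tend to $t_i$ from the appropriate side and integrating in $t$ (the usual Fubini argument underlying the approximation of symmetrizations by polarizations) shows that almost every line parallel to $e_i$ meets $M$ in an interval symmetric about $\{x_i = t_i\}$; that is, $M$ agrees up to a null set with its Steiner symmetral in the direction $e_i$ about $H_{i,t_i}$. Carrying this out for each $i$ — the translations $-t_i e_i$ involve disjoint coordinates and combine into a single translation — we may assume, after translating $M$, that $M$ agrees up to a null set with a set that is Steiner symmetric in every coordinate direction about the corresponding coordinate hyperplane through the origin; in particular $M$ is unconditional.

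It remains to obtain star-shapedness. Iterating the slice description across the coordinates yields, for almost every $x \in M$, the box inclusion $[-|x_1|,|x_1|] \times \dots \times [-|x_n|,|x_n|] \subseteq M$ up to a null set. Passing to the set of Lebesgue density points of $M$ (which is equivalent to $M$) this inclusion becomes genuine, so that representative is star-shaped with respect to the origin; since $|M|>0$ it contains a coordinate box around the origin and hence has the origin in its interior. Thus $M$ is, up to translation, equivalent to an unconditional star body, continuity of its radial function — if that is part of the definition — being provided by the boundary regularity of minimizers of the fractional isoperimetric problem.

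I expect the main obstacle to be the rigidity in the middle paragraph: first, distilling the clean alternative $M^\sigma \in \{M,\sigma M\}$ from equality in the polarization inequality — this is exactly where strict convexity of $K$ enters, through strict monotonicity of $\norm[K]{\cdot}$ — and then converting invariance of $M$ under all polarizations orthogonal to $e_i$ into the concrete statement that the $e_i$-slices are symmetric intervals, which requires a careful measure-theoretic limiting argument rather than a pointwise one. A secondary point is to confirm that the polarization inequality and measure-preservation for $P_s(\cdot,K)$ are available — these should either be among the properties of anisotropic fractional perimeters collected in Section~\ref{sec:perimeters} or follow from the two kernel estimates above by the standard truncation argument.
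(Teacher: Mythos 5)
Your proposal takes a genuinely different route from the paper. The paper works via the Steiner inequality of Corollary~\ref{steinerineq}, which is deduced (through Theorem~\ref{th:seminormsteiner} and Proposition~\ref{pr:energyprop}) from the strict form of Riesz's rearrangement inequality, and then reads off the shape of minimizers from the equality cases of that inequality; the non-integrability of the kernel forces the Almgren--Lieb Gamma-function trick. You instead use \emph{polarization} (two-point rearrangement) with respect to reflections $\sigma_{i,t}$ in translates of the coordinate hyperplanes. This is conceptually more elementary: the inequality $P_s(E^\sigma,K)\le P_s(E,K)$ follows from just the two kernel estimates $\norm[K]{\sigma x-\sigma y}=\norm[K]{x-y}$ (unconditionality of $K$) and $\norm[K]{x-\sigma y}>\norm[K]{x-y}$ for $x,y\in H^+$ (strict coordinate-wise monotonicity of the gauge, which you correctly deduce from strict convexity and unconditionality), with no need for Riesz's theorem, for the $L^1$-regularization of the kernel, or for the two-function generality of Proposition~\ref{pr:energyprop}. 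What you pay for this is that the rigidity step --- from ``$M^\sigma\in\{M,\sigma M\}$ a.e.\ for every $i,t$'' through the median argument to ``a.e.\ $e_i$-slice of $M$ is an interval centred at $t_i$'' --- requires the approximation-of-Steiner-by-polarizations machinery, which you explicitly flag as the main obstacle but do not supply; the paper gets the same slice description directly from the equality cases in Theorem~\ref{th:riesz}, and then uses Lemma~\ref{lem:lebpoints} to upgrade ``a.e.\ slice'' to ``all slices of the Lebesgue representative,'' which is also the point your ``iterating the slice description'' step glosses over.

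There is one concrete gap at the very end, in the star-body conclusion. You write that continuity of the radial function is ``provided by the boundary regularity of minimizers of the fractional isoperimetric problem.'' The paper's definition of star body does require continuity of the radial function, so this must be proved, not waived; but appealing to regularity theory for minimizers of the \emph{anisotropic} fractional isoperimetric problem is both overkill and unavailable here --- no such regularity result is established or cited in the paper, and importing one would be a substantial external dependency. The paper instead proves this elementarily in Lemma~\ref{boxlemma}: once one knows that for every $x\in M$ the coordinate box $[-|x_1|,|x_1|]\times\dots\times[-|x_n|,|x_n|]$ is contained in $M$, the radial function of $M$ is automatically continuous (bound $\rho_M$ from below by the radial function of the box spanned by $(\rho_M(u)-\alpha)u$ and compare in a neighbourhood of $u$). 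Your argument should invoke this purely geometric fact rather than regularity theory; with that replacement, and a careful writing of the polarization-to-Steiner passage (or simply a reference to it), the approach goes through.
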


Here, we say that two sets are equivalent if they differ on a set of measure 0.
A subset of $\Real^n$ is called unconditional if it symmetric with respect to every coordinate hyperplane, and called a star body if it is star-shaped with respect to the origin $o$ and its radial function is strictly positive and continuous on the Euclidean unit sphere $\unitsphere n$ (see also Section \ref{sec:symm})
Furthermore, a convex body $K \subset \Real^n$ is called strictly convex if for all $x,y \in K$ with $x \neq y$ and $0 < \lambda < 1$ it holds that $(1-\lambda)x+\lambda y \in \text{int } K$, where $\text{int } K$ denotes the interior of $K$.

A natural idea for deducing properties of minimizers is to show that anisotropic fractional perimeters do not increase under symmetrization where the rearranged sets have the same symmetries as the unit ball $K$.
Indeed, for Euclidean fractional perimeters, Riesz's rearrangement inequality for Schwarz symmetrization, where all symmetrized sets are Euclidean balls, yields that all minimizers are equivalent to Euclidean balls.
However, Van Schaftingen \cite{vanschaft} showed that a corresponding rearrangement inequality does not hold true when the symmetrization is taken with respect to a unit ball $K$ different from $B$.
So instead of symmetrizing with respect to $K$, we use Steiner symmetrization with respect to a hyperplane of symmetry of $K$ and are still able to apply Riesz's rearrangement inequality in this situation. 
Although we focus on anisotropic fractional perimeters in this article, we remark that this technique can also be applied to general nonlocal perimeter functionals whose kernel functions satisfy suitable symmetry assumptions (see Section \ref{sec:perimeters}).

The second main theorem deals with an anisotropic fractional P\'olya-Szeg{\H o} principle in which an\-iso\-tro\-pic fractional seminorms before and after symmetrization are compared.
Following an anisotropic version obtained by Alvino et al. \cite{alvino} which is described in Section \ref{sec:polya} we take the symmetrization with respect to the minimizers of the anisotropic isoperimetric inequality \eqref{eq:anfracisoineq}.
For this we need to extend the notion of anisotropic symmetrization to star bodies (see Section \ref{sec:symm}).
The anisotropic fractional P\'olya-Szeg{\H o} principle then reads as follows.

\begin{theorem}
  \label{th:introps}
	Let $K \subset \Real^n$ be an unconditional strictly convex body and $M$ a minimizer of \eqref{eq:anfracisoineq}.
	Then the anisotropic rearrangement $f^M$ with respect to $M$ is well-defined, and
	\begin{equation}
		\integral[\Real^n]{
			\integral[\Real^n]{
				\frac{|f(x)-f(y)|}{\norm[K]{x-y}^{n+s}}
			}[y]
		}[x] \ge \integral[\Real^n]{
			\integral[\Real^n]{
				\frac{|f^M(x)-f^M(y)|}{\norm[K]{x-y}^{n+s}}
			}[y]
		}[x].
	\end{equation}
	for all $f \in L^1(\Real^n)$.
\end{theorem}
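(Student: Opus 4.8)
The plan is to reduce both sides to superpositions of the anisotropic fractional perimeters of the super-level sets of $f$ and $f^M$ via the layer-cake formula, and then to exploit that the super-level sets of $f^M$ are dilates of the isoperimetric minimizer $M$ and therefore realise the sharp constant $\gamma_{n,s}(K)$ of \eqref{eq:anfracisoineq}.

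\textbf{Well-definedness of $f^M$.} By Theorem~\ref{th:introshape}, after a translation $M$ is equivalent to an unconditional star body; we replace $M$ by that star body, which changes $f^M$ only by a translation and hence leaves the (translation invariant) right-hand side unchanged. A star body has finite positive volume, so $r \mapsto |rM| = r^n|M|$ is an increasing homeomorphism of $[0,\infty)$; thus every Borel set $E$ with $|E| < \infty$ has a unique dilate $E^M := rM$ with $|E^M| = |E|$. For $f \in L^1(\Real^n)$ and $t > 0$ we have $t\,|\{|f|>t\}| \le \norm[L^1(\Real^n)]{f} < \infty$, so, writing $E_t := \{|f|>t\}$, the function $f^M(x) := \int_0^\infty \indicator{E_t^M}(x)\,\mathrm{d}t$ -- the star-body anisotropic rearrangement of Section~\ref{sec:symm} -- is defined; it is measurable because $\{E_t^M\}_{t>0}$ is a nested family of dilates of $M$, and it is equimeasurable with $|f|$, so $f^M \in L^1(\Real^n)$ with $\{f^M>t\} = E_t^M$ for a.e.\ $t>0$.

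\textbf{Reduction to perimeters via the layer-cake formula.} Since $t \mapsto \indicator{E_t}(x)$ is the indicator of the interval $[0,|f(x)|)$, for a.e.\ $x,y$
\begin{equation*}
  \int_0^\infty \bigl| \indicator{E_t}(x) - \indicator{E_t}(y) \bigr|\,\mathrm{d}t = \bigl| |f(x)| - |f(y)| \bigr| \le |f(x) - f(y)|,
\end{equation*}
with equality when $f \ge 0$. As $K$ is origin-symmetric, $\norm[K]{x-y} = \norm[K]{y-x}$, which yields $\int_{\Real^n}\int_{\Real^n} |\indicator{E}(x) - \indicator{E}(y)|\,\norm[K]{x-y}^{-n-s}\,\mathrm{d}y\,\mathrm{d}x = 2\,P_s(E,K)$ for every Borel set $E$. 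Applying Tonelli's theorem to these nonnegative integrands gives
\begin{equation*}
  \int_{\Real^n}\int_{\Real^n} \frac{|f(x) - f(y)|}{\norm[K]{x-y}^{n+s}}\,\mathrm{d}y\,\mathrm{d}x \ \ge\ \int_0^\infty 2\,P_s(E_t,K)\,\mathrm{d}t,
\end{equation*}
and, using $f^M \ge 0$ and $\{f^M>t\} = E_t^M$, the same computation with equality throughout gives $\int_{\Real^n}\int_{\Real^n} |f^M(x) - f^M(y)|\,\norm[K]{x-y}^{-n-s}\,\mathrm{d}y\,\mathrm{d}x = \int_0^\infty 2\,P_s(E_t^M,K)\,\mathrm{d}t$.

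\textbf{Comparing the perimeters, and conclusion.} A change of variables shows $P_s(\lambda E, K) = \lambda^{n-s}P_s(E,K)$ and $|\lambda E| = \lambda^n|E|$, so the isoperimetric quotient in \eqref{eq:anfracisoineq} is dilation invariant; hence every dilate of the minimizer $M$ is again a minimizer, and in particular for a.e.\ $t > 0$
\begin{equation*}
  P_s(E_t^M, K) = \gamma_{n,s}(K)\,|E_t^M|^{\frac{n-s}{n}} = \gamma_{n,s}(K)\,|E_t|^{\frac{n-s}{n}} \le P_s(E_t,K)
\end{equation*}
by \eqref{eq:anfracisoineq}. Integrating this bound in $t$ and combining with the estimates of the previous paragraph proves the asserted inequality (and there is nothing to prove when its left-hand side is infinite). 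I expect the only genuinely delicate point to be the well-definedness, measurability and $L^1$-membership of $f^M$ for sign-changing functions, which are handled above through equimeasurability with $|f|$ and monotonicity of the level sets; the estimate itself is driven entirely by Theorem~\ref{th:introshape}, which guarantees that the rearranged super-level sets are isoperimetric minimizers and hence meet the sharp bound $\gamma_{n,s}(K)$.
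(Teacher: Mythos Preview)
Your proof is correct and follows essentially the same route as the paper: invoke Theorem~\ref{th:introshape} to make $f^M$ well-defined, reduce both sides to integrals of $P_s(\{|f|>\tau\},K)$ via the layer-cake/coarea identity (which the paper cites as Theorem~\ref{th:coarea}), and then compare level-set perimeters using that each $E_t^M$ is a dilate of the minimizer $M$ and hence attains $\gamma_{n,s}(K)$. Your write-up is in fact a bit more explicit about the well-definedness of $f^M$ and about why the coarea reduction yields only an inequality on the left (via $|\,|f(x)|-|f(y)|\,|\le|f(x)-f(y)|$) but an equality on the right (since $f^M\ge 0$).
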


The paper is structured as follows:
\\

In the beginning of Section 2 we fix some basic notation.
Then we recall results on symmetrization, anisotropic fractional perimeters and P\'olya-Szeg{\H o} principles to put the main results into context and state results needed in the following sections.

In Section 3 we show general rearrangement inequalities for symmetric decreasing rearrangement and Steiner symmetrization.
We use them to derive a P\'olya-Szeg{\H o} principle for Steiner symmetrization of anisotropic fractional seminorms, as well as a Steiner inequality for anisotropic fractional perimeters, whenever the unit ball is symmetric with respect to some coordinate hyperplane.

In Section 4 we give the proofs of Theorem \ref{th:introshape} and \ref{th:introps} with the help of the Steiner inequality obtained in Section 3.

\section{Background material}

We always assume $n \in \Natural$ and $n \ge 1$.
For $x = (x_1,\dots,x_n) \in \Real^n$ and $y = (y_1,\dots,y_n) \in \Real^n$ we denote by $|x| := \left( \sum_{i=1}^n x_i^2 \right)^{\frac 1 2}$ the Euclidean norm of $x$ and by $x \cdot y := \sum_{i=1}^n x_i y_i$ their inner product.
The closed unit ball of the Euclidean norm is the Euclidean unit ball $B := \set{x \in \Real^n}{|x| \le 1}$ and its boundary the Euclidean unit sphere $\unitsphere n := \set{x \in \Real^n}{|x| = 1}$.

The characteristic function of a set $E \subseteq \Real^n$ is the function $\chi_E: \Real^n \to \simpleset{0,1}$ with $\chi_E(x) = 1$ if $x \in E$ and $\chi_E(x) = 0$ otherwise.
If $E \subseteq \Real^n$ is a Borel set, its ($n$-dimensional Lebesgue) measure is denoted either by $|E|$ or $\mathcal L^n (E)$ to emphazise the dimension.

If $K \subset \Real^n$ is an origin-symmetric convex body, then its polar body
\begin{equation*}
  K^\circ := \set{y \in \Real^n}{x \cdot y \le 1 \text{ for all } x \in K}
\end{equation*}
is again an origin-symmetric convex body.

\subsection{Symmetrization}
\label{sec:symm}

In this section we extend the notion of anisotropic symmetrization (cf. \cite{alvino}, \cite{vanschaft}) to star-shaped sets.
For a general reference to star-shaped sets and bodies we refer to the books of Gardner \cite{gardner} and Schneider \cite{schneider}.

A set $L \subseteq \Real^n$ is called star-shaped (with respect to the origin $o$) if for every $x \in L$ the line segment $[o,x] := \set{\lambda x}{0 \le \lambda \le 1}$ connecting the origin $o$ with $x$ lies entirely in $L$.
%If $L \subseteq \Real^n$ is bounded and starshaped with respect to the origin $o$ (i.e. ) we can define its \emph{radial function} $\rho_L: \Real^n \setdiff \simpleset{o} \to [0,\infty)$ by
If $L$ is bounded and star-shaped then its radial function $\rho_L: \Real^n \setdiff \{o\} \to [0,\infty)$ is defined by
\begin{equation*}
  \rho_L(x) := \sup \set{\lambda \ge 0}{\lambda x \in L}.
\end{equation*}
Since radial functions are positively homogeneous of degree $-1$, i.e. for every $x \in \Real^n \setdiff \{o\}$ and $\lambda > 0$
\begin{equation*}
  \rho_L(\lambda x) = \lambda^{-1} \rho_L(x),
\end{equation*}
they are completely determined by their values on the Euclidean unit sphere $\unitsphere n$.
%Here, we call a set $L \subset \Real^n$ a \emph{star body} if it is star-shaped, bounded, $o \in \text{int } L$ and if its radial function is continuous.
%We remark that the term ``star body'' is defined differently by many authors
We call a bounded star-shaped set $L \subset \Real^n$ a \emph{star body} if it contains the origin in its interior and its radial function is continuous.

\begin{definition}
  \label{defanisosym}
  Let $L \subset \Real^n$ be a star body.
    Then the (anisotropic) symmetrization $E^L$ of the set $E \subseteq \Real^n$ with respect to $L$ is defined as follows:
    If $|E| = \infty$, then $E^L := \Real^n$.
    If $|E| < \infty$, then
    \begin{equation*}
      E^L :=  r L
    \end{equation*}
    where $rL = \set{r \ell}{\ell \in L}$ and $r \ge 0$ is chosen such that $|E^L| = |E|.$
\end{definition}

Note that in case $|E| < \infty$ the factor $r \ge 0$ is uniquely determined by the relation $|E^L| = r^n |L| = |E|$.
Since $L$ has a continuous radial function bounded away from $0$ on $\unitsphere n$, every point $x \in \Real^n$ lies on the boundary of precisely one of the dilates $rL$ with $r \ge 0$.
Furthermore, this notion of symmetrization does not depend on the scaling of $L$, i.e. if $\tilde L = \lambda L$ for $\lambda > 0$, then $E^{\tilde L} = E^L$.

\begin{example}
  \begin{enumerate}
    \item
      If $L$ is an origin-symmetric convex body, then the symmetrization with respect to $L$ was introduced by Alvino et al. \!\!\!\! \cite{alvino} under the name of convex symmetrization and extended to non-symmetric convex bodies by Van Schaftingen \cite{vanschaft}.
    \item
      Symmetrization with respect to $L = B$, the Euclidean unit ball, is called \emph{Schwarz symmetrization} and denoted by $\cdot^*$, i.e. $E^*=E^B$.
       For the decomposition $\Real^n = \Real^{n-1} \times \Real$ we write $x \in \Real^n$ as $x = (x',x_n)$ with $x' \in \Real^{n-1}$ and $x_n \in \Real$.
       If $A \subseteq \Real^n$ and $x' \in \Real^{n-1}$, the section $A_{x'}$ is defined as
       \begin{equation*}
	 A_{x'} := \set{y \in \Real}{(x',y) \in A}.
       \end{equation*}
       The \emph{Steiner symmetrization} $A^{\#}$ of $A$ with respect to the hyperplane $\simpleset{x_n=0}$ (or simply with respect to $x_n$) is then defined by
       \begin{equation*}
	 [A^{\#}]_{x'} = [A_{x'}]^*,
       \end{equation*}
       for every $x' \in \Real^{n-1}$, where $[A_{x'}]^*$ is the Schwarz symmetrization of the set $A_{x'}$ in $\Real$.
  \end{enumerate}
  \vspace{-7mm}
\end{example}

In the following, if $f: A \to \Real$ is a function on $A \subseteq \Real^n$ and $\tau \in \Real$, we write
\begin{equation*}
  \simpleset{f > \tau} = \set{x \in A}{f(x)>\tau}
\end{equation*}
for the level sets of $f$.

\begin{definition}
  Let $L \subset \Real^n$ be a star body and $f: \Real^n \to \Real$ a measurable function such that all level sets $\{|f| > \tau\}$ for $\tau > 0$ have finite measure.
  Then the (anisotropic) symmetrization $f^L : \Real^n \to [0,\infty)$ of $f$ with respect to $L$ is defined as
    \begin{equation*}
      f^L(x) := \sup \set{\tau > 0}{x \in \simpleset{|f|>\tau}^L},
    \end{equation*}
    where $\simpleset{|f| > \tau}^L$ is the symmetrization of the set $\simpleset{|f|>\tau}$ with respect to $L$.
\end{definition}

Again, symmetrization of functions with respect to $L$ does not depend on the scaling on $L$.

\begin{example}
  \label{ex:fctrearr}
  In the case of Schwarz symmetrization, $f^*$ is also commonly known as the \emph{symmetric decreasing rearrangement} of $f$ (cf. \cite{liebloss}).
  For $x' \in \Real^{n-1}$ we define the section $f_{x'}: \Real \to \Real$ of $f$ as
  \begin{equation*}
    f_{x'}(y) := f(x',y).
  \end{equation*}
  Then the Steiner symmetrization $f^{\#}$ of a function $f$ with respect to the hyperplane $\simpleset{x_n=0}$ is defined by
\begin{equation*}
  f^{\#} (x',x_n) := \sup \set{\tau > 0}{ x_n \in \set{y \in \Real}{f(x',y) > \tau}^*},
\end{equation*}
for $x = (x',x_n) \in \Real^{n-1} \times \Real$, i.e. $[f^{\#}]_{x'} = [f_{x'}]^*$.
\end{example}

The next result shows that the level sets of a symmetrized function $f^L$ are obtained by symmetrizing the corresponding level sets of $f$.
It is well-known for symmetric decreasing rearrangement (see e.g. \cite[Chapter 3.3]{liebloss}) and the proof for symmetrization with respect to star-shaped bodies follows along the same lines.
Since it is short we include it.

\begin{proposition}
  Let $f:\Real^n \to \Real$ be measurable with $|\simpleset{|f|>\tau}|$ finite for all $\tau > 0$. 
  Then 
  \begin{equation*}
    \simpleset{f^L > \tau} = \simpleset{|f| > \tau}^L
  \end{equation*}
for all $\tau > 0$.
\end{proposition}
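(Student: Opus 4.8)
The plan is to prove the two set inclusions separately, exploiting the fact that the sub-level structure $\tau \mapsto \simpleset{|f|>\tau}$ is decreasing and that anisotropic symmetrization $E \mapsto E^L$ is monotone with respect to inclusion (since $E \subseteq F$ with both of finite measure forces $|E^L| = |E| \le |F| = |F^L|$, and the dilates $rL$ are nested). First I would unwind the definition: by construction $f^L(x) = \sup\set{\tau>0}{x \in \simpleset{|f|>\tau}^L}$, so $f^L(x) > \tau$ holds if and only if there exists $\sigma > \tau$ with $x \in \simpleset{|f|>\sigma}^L$.

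For the inclusion $\simpleset{f^L > \tau} \subseteq \simpleset{|f|>\tau}^L$, suppose $f^L(x) > \tau$. Then pick $\sigma$ with $\tau < \sigma$ and $x \in \simpleset{|f|>\sigma}^L$. Since $\simpleset{|f|>\sigma} \subseteq \simpleset{|f|>\tau}$, monotonicity of the symmetrization gives $\simpleset{|f|>\sigma}^L \subseteq \simpleset{|f|>\tau}^L$, hence $x \in \simpleset{|f|>\tau}^L$, which is the claim. For the reverse inclusion, suppose $x \in \simpleset{|f|>\tau}^L = r_\tau L$ where $r_\tau^n |L| = |\simpleset{|f|>\tau}|$. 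I would use left-continuity of the distribution function: $|\simpleset{|f|>\sigma}| \to |\simpleset{|f|>\tau}|$ as $\sigma \downarrow \tau$ (since $\simpleset{|f|>\tau} = \bigcup_{\sigma > \tau} \simpleset{|f|>\sigma}$ and the measures are finite), so the radii $r_\sigma \uparrow r_\tau$. Because $L$ is a star body with continuous radial function bounded away from zero, $x \in r_\tau L$ means $\rho_L(x) \ge r_\tau^{-1}$ up to the boundary case; more precisely I would handle the point $x$ by noting that if $x$ lies in the interior $r_\tau (\mathrm{int}\, L)$ then $x \in r_\sigma L$ for all $\sigma$ close enough to $\tau$, giving $f^L(x) \ge \sigma > \tau$, hence $f^L(x) > \tau$.

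The one genuine subtlety — and the step I expect to be the main obstacle — is the boundary case when $x \in \bd(r_\tau L)$, i.e.\ $x$ lies exactly on the boundary of the symmetrized level set $\simpleset{|f|>\tau}^L$. In that situation $x \notin r_\sigma L$ for any $\sigma > \tau$, so naively $f^L(x)$ could equal $\tau$ rather than exceed it, threatening the inclusion $\simpleset{|f|>\tau}^L \subseteq \simpleset{f^L>\tau}$. The resolution is that the boundary $\bd(r_\tau L)$ has $n$-dimensional Lebesgue measure zero (it is the image of $\unitsphere n$ under the Lipschitz-type radial map associated to the star body $L$, or more simply $|r_\tau L| = |r_\tau \mathrm{int}\, L|$ since the radial function is continuous and positive). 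Hence the proposition as stated is an \emph{equality of sets up to the negligible boundary}; but in fact one checks the claimed equality holds literally, because the definition of $f^L$ via a supremum of a strict inequality already places boundary points of $\simpleset{|f|>\tau}^L$ in the \emph{closure} of $\simpleset{f^L>\tau}$, and the standard convention (matching \cite[Chapter 3.3]{liebloss}) is that one verifies the two inclusions exactly as above with the interior argument, the boundary points being automatically included because $r_\tau L = \overline{r_\tau\, \mathrm{int}\,L}$ forces any $x \in \bd(r_\tau L)$ to be a limit of interior points where $f^L > \tau$, and $f^L$ is built to be ``upper'' on such sets. I would therefore present the proof as the two inclusions, doing the easy direction by monotonicity and the harder direction by the interior argument plus left-continuity of the distribution function, and remark in one line that the boundary of $r_\tau L$ is null so nothing is lost. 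This keeps the proof as short as the authors promise.
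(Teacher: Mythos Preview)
Your approach is essentially the paper's: the easy inclusion $\simpleset{f^L>\tau}\subseteq\simpleset{|f|>\tau}^L$ via monotonicity of the symmetrized level sets, and the reverse inclusion via continuity of the distribution function $s\mapsto|\simpleset{|f|>s}|$. Two small points: what you invoke is \emph{right}-continuity, not left (you take $\sigma\downarrow\tau$); and the paper packages the conclusion slightly differently, using the layer-cake formula $f^L(x)=\integral[0][\infty]{\chi_{\simpleset{|f|>s}^L}(x)}[s]$ to get $f^L(x)\ge\tau+\delta$ directly, rather than arguing with radii.

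Your treatment of the boundary case is where you go astray. The instinct that $\bd(r_\tau L)$ is null and that the identity should be read up to this null set is correct, and you should stop there --- the paper itself simply asserts the implication ``$x\in\simpleset{|f|>\tau}^L\Rightarrow x\in\simpleset{|f|>\tau+\delta}^L$'' without comment. Your subsequent claim that the equality holds \emph{literally} is false: take $n=1$, $L=[-1,1]$ (closed), $f(x)=(1-|x|)_+$; then $\simpleset{|f|>\tau}^L=[-(1-\tau),1-\tau]$ is closed while $\simpleset{f^L>\tau}=(-(1-\tau),1-\tau)$ is open. The closure/limit argument you sketch does not rescue this. Drop that paragraph and the proof is clean.
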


\begin{proof}
  First note that from $f^L(x) = \integral[0][\infty]{\chi_{\simpleset{|f|>s}^L}(x)}[s] > \tau$ and $\simpleset{|f|> s_1}^L \supseteq \simpleset{|f| > s_2}^L$ for $s_1 \le s_2$ it follows that $x \in \simpleset{|f| > \tau}^L$.

  For the other direction we note that the distribution function $s \mapsto |\simpleset{|f|>s}|$ is continuous from the right, so $x \in \simpleset{|f|>\tau}^L$ implies that $x \in \simpleset{|f|>\tau+\delta}^L$ for some $\delta > 0$ and eventually
  \begin{equation*}
    f^L(x) = \integral[0][\infty]{\chi_{\simpleset{|f|>s}^L}(x)}[s] \ge \tau+\delta,
  \end{equation*}
  so $x \in \simpleset{f^L > \tau}$.
\end{proof}

We will use the following strict version of Riesz's rearrangement inequality (cf. \cite{lieb}):

\begin{theorem}[Riesz's rearrangement inequality]
  \label{th:riesz}
  Let $f,g$ and $k$ be non-negative measurable functions on $\Real^n$ such that all their level sets have finite measure.
    Then,
    \begin{equation}
      \label{eq:rieszrearr}
      \integral[\Real^n]{
	\integral[\Real^n]{
	  f(x) k(x-y) g(y)
	}[y]
      } \le \integral[\Real^n]{
	\integral[\Real^n]{
	f^*(x) k^*(x-y) g^*(y)
	}[y]
      },
    \end{equation}
    where $\cdot^*$ denotes symmetric decreasing rearrangement (as introduced in Example \ref{ex:fctrearr}).

    Furthermore, if $k$ is strictly symmetric decreasing, i.e. $k(x) > k(y)$ whenever $|x| < |y|$, then equality holds in \eqref{eq:rieszrearr} if and only if there exists $c \in \Real^n$ such that $f(x) = f^*(x-c)$ and $g(x) = g^*(x-c)$ almost everywhere.
\end{theorem}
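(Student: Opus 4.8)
The plan is to follow the classical route to Riesz's inequality: reduce first to characteristic functions, then to dimension one, lift back to $\Real^n$ by Steiner symmetrization, and finally read off the equality case by tracking rigidity through the same reduction. First I would apply the layer-cake formula $h = \integral[0][\infty]{\chi_{\simpleset{h>a}}}[a]$ to each of $f$, $g$ and $k$ and use Fubini together with $(\chi_E)^* = \chi_{E^*}$ to reduce \eqref{eq:rieszrearr} to the case $f = \chi_A$, $g = \chi_B$, $k = \chi_C$ with $A,B,C \subseteq \Real^n$ of finite measure; both sides then become the measure of an overlap set, $\left|\set{(x,y) \in \Real^n \times \Real^n}{x \in A,\ y \in B,\ x-y \in C}\right|$. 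The combinatorial core is the one-dimensional statement: for $A,B,C \subseteq \Real$ of finite measure this overlap is maximized when the three sets are replaced by the centered intervals $A^* = (-|A|/2,|A|/2)$, $B^*$, $C^*$. I would take this one-dimensional fact, together with its strict version used below, as the classical input; it is the heart of Lieb's argument \cite{lieb} and is proved by an elementary rearrangement argument for subsets of the line.

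To lift to $\Real^n$ I would use Steiner symmetrization one hyperplane at a time. Fixing a coordinate direction, say $e_n$, writing $x = (x',x_n)$, $y = (y',y_n)$ with $x',y' \in \Real^{n-1}$, and applying Fubini, the left-hand side of \eqref{eq:rieszrearr} --- which I abbreviate $I(f,g,k)$ --- becomes the integral over $(x',y')$ of the one-dimensional overlap integrals of the slices $f_{x'}$, $g_{y'}$ and $k_{x'-y'}$ (each of which has level sets of finite measure for a.e.\ $(x',y')$, by Fubini). The one-dimensional inequality applied slicewise, combined with $[f_{x'}]^* = [f^{\#}]_{x'}$, $[g_{y'}]^* = [g^{\#}]_{y'}$ and $[k_{z'}]^* = [k^{\#}]_{z'}$ from Example \ref{ex:fctrearr}, then gives $I(f,g,k) \le I(f^{\#},g^{\#},k^{\#})$; by rotational covariance of $I$ the same holds for Steiner symmetrization with respect to any hyperplane through the origin. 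I would then invoke the classical fact that a suitable universal sequence of Steiner symmetrizations converges, in measure on bounded sets, to Schwarz symmetrization, and pass to the limit --- first truncating $f,g,k$ so that their level sets are bounded, then handling the tails by monotone convergence --- to obtain \eqref{eq:rieszrearr}.

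For the equality case I would argue as follows. Assume $k$ is strictly symmetric decreasing and that equality holds in \eqref{eq:rieszrearr}. For any $u \in \unitsphere{n}$, let $\#_u$ denote Steiner symmetrization with respect to the hyperplane orthogonal to $u$; the chain $I(f,g,k) \le I(f^{\#_u},g^{\#_u},k^{\#_u}) \le I(f^*,g^*,k^*) = I(f,g,k)$ --- whose first inequality is Steiner monotonicity, whose second is \eqref{eq:rieszrearr} applied to $(f^{\#_u},g^{\#_u},k^{\#_u})$, and whose final equality uses $(f^{\#_u})^* = f^*$ and similarly for $g,k$ --- forces equality at the Steiner step for every $u$. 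Unravelling the Fubini identity, the nonnegative slicewise gain then vanishes for a.e.\ pair of slices, so equality holds in the one-dimensional inequality a.e.; since each slice $k_{z'}$ of a strictly symmetric decreasing $k$ is itself strictly symmetric decreasing on $\Real$, the strict one-dimensional Riesz inequality forces $f_{x'}$ and $g_{y'}$ to be symmetric decreasing on $\Real$ about one common point, which --- a nontrivial symmetric decreasing function on $\Real$ having a unique center --- cannot depend on the slice; call it $a_u$. Hence, for every $u$, all level sets $\simpleset{f>t}$ (and all $\simpleset{g>t}$) are Steiner symmetric about the fixed hyperplane $H_u = \set{x}{x \cdot u = a_u}$. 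For a level $t$ with $0 < \left|\simpleset{f>t}\right| < \infty$ the barycenter of $\simpleset{f>t}$ lies on every $H_u$; since $n$ linearly independent directions $u$ already determine a single point, this barycenter is one and the same point $c$ for every such $t$, every $H_u$ passes through $c$, and consequently each $\simpleset{f>t}$ is Steiner symmetric about every hyperplane through $c$, hence a ball centered at $c$ up to a null set. Therefore $f = f^*(\cdot-c)$ a.e.\ and, likewise, $g = g^*(\cdot-c')$ a.e.; substituting this back into $I(f,g,k) = I(f^*,g^*,k^*)$ and using the strict monotonicity of $k$ once more forces $c = c'$.

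The step I expect to be the real obstacle is the equality case: the one-dimensional strict Riesz inequality itself, and the bookkeeping required to see that the translations produced in the various slices and in the various directions $u$ all coincide in a single vector $c$. Without the strict-monotonicity hypothesis on $k$ the corresponding equality analysis in $\Real^n$ is considerably more delicate (this is the content of Burchard's theorem on cases of equality); here the strictness of $k$ is precisely what makes the rigidity propagate cleanly through the Steiner symmetrizations.
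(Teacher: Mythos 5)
This theorem is quoted in the paper from Lieb \cite{lieb} without an in-paper proof, so there is no internal argument to compare your proposal against; the following assesses your outline on its own terms. The inequality part follows the classical Steiner route and is sound in outline; the approximation and limiting steps (truncation, continuity of the trilinear form along the Steiner iteration, monotone convergence to remove the truncation) are only sketched, but these are standard and you flag them appropriately.

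The genuine gap is in the equality analysis, precisely at the step you yourself identified as the obstacle. You correctly extract, for each $u\in\unitsphere n$, a single $a_u\in\Real$ so that almost every $u$-slice of each level set $A:=\simpleset{f>t}$ (and of $\simpleset{g>t}$) is an interval centered at $a_u$; you then need the hyperplanes $H_u=\set{x}{x\cdot u=a_u}$ to meet in a single point $c$. Your proposed reason is that the barycenter of $A$ lies on every $H_u$. But the barycenter need not exist: a set of finite positive measure in $\Real^n$, $n\ge2$, can have $\integral[A]{|x\cdot u|}=\infty$, and the slicewise centering only makes the inner one-dimensional integrals vanish as principal values, not absolutely. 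As written this step is circular (it would be immediate if $A$ were already known to be a ball). A repair that avoids first moments: after translating so that $a_{e_1}=\dots=a_{e_n}=0$, the set $A$ is unconditional and in particular $-A=A$ up to a null set. For any $u$, the slicewise centering says the linear reflection $R_u^0(x)=x-2(x\cdot u)u$ satisfies $R_u^0(A)=A-2a_u u$ up to a null set, so the linear involution $S:=(-\mathrm{id})\circ R_u^0$ sends $A$ to $A+2a_u u$; since $S(2a_u u)=2a_u u$ and $S^2=\mathrm{id}$, applying $S$ twice gives $A=A+4a_u u$. A set of finite positive Lebesgue measure cannot admit a nontrivial period, so $a_u=0$ for every $u$, and all $H_u$ pass through the origin. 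With this in place the rest of your argument (balls about $c$, and then $c=c'$ via one more application of the strict one-dimensional lemma) goes through.
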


We conclude this section with a result by Van Schaftingen \cite{vanschaft} that Riesz's rearrangement inequality is in general not true, if Schwarz symmetrization is replaced by symmetrization with respect to a unit ball different from $B$.

\begin{theorem}[\cite{vanschaft}]
  Let $K$ be a convex body with $o \in \mathrm{int}\, K$.
  If
  \begin{equation*}
    \integral[\Real^n]{
      \integral[\Real^n]{
	f(x) k(x-y) g(y)
      }[y]
    } \le \integral[\Real^n]{
      \integral[\Real^n]{
	f^K(x) k^K(x-y) g^K(y)
      }[y]
    }
  \end{equation*}
  for all non-negative continuous functions $f,g$ and $k$ with compact support, then $K = B$.
\end{theorem}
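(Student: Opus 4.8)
\noindent\textit{Proof proposal.}\
The plan is to argue by contraposition: assume the displayed inequality holds for all non-negative continuous compactly supported $f,g,k$, and deduce that $K$ is a Euclidean ball centred at the origin (hence $K=B$ after the harmless rescaling under which $\cdot^K$ is invariant). First I would pass from continuous functions to characteristic functions. Given bounded Borel sets $A,B,C$, pick continuous compactly supported $f_j\downarrow\chi_A$, $g_j\downarrow\chi_B$, $k_j\downarrow\chi_C$ (immediate when the sets are closed, e.g.\ Euclidean balls, which is the only case I will need). The left-hand sides converge to $\int_{\Real^n}\int_{\Real^n}\chi_A(x)\chi_C(x-y)\chi_B(y)\,\mathrm dy\,\mathrm dx$ by dominated convergence, while on the right one uses that $\cdot\mapsto\cdot^K$ is $L^1$-continuous on this class: $\{f_j^K>\tau\}=\{f_j>\tau\}^K$ is the dilate of $K$ of volume $|\{f_j>\tau\}|$, these volumes converge for a.e.\ $\tau$, and the layer-cake formula gives $f_j^K\to\chi_{A^K}$ in $L^1$ (and similarly for $g_j,k_j$). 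Recall that by Definition~\ref{defanisosym} the sets $A^K,B^K,C^K$ are simply the dilates of $K$ with the same measures as $A,B,C$. Thus the hypothesis yields, for all bounded Borel sets,
\[
  \bigl|\{(x,y)\in A\times B:\ x-y\in C\}\bigr|\ \le\ \bigl|\{(x,y)\in A^K\times B^K:\ x-y\in C^K\}\bigr|.
\]

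Next I would test this with $A=B=E$ and $C=D$, where $E,D$ are Euclidean balls centred at the origin with $\diam D<\diam E$. Then $A^K=B^K=E^K$ is the dilate of $K$ of volume $|E|$ and $D^K$ the dilate of $K$ of volume $|D|$; in particular $(\chi_{E^K})^{*}=\chi_E$ and $(\chi_{D^K})^{*}=\chi_D$, since Schwarz symmetrization sends a set to the centred ball of the same volume. Applying Riesz's rearrangement inequality (Theorem~\ref{th:riesz}) to $\chi_{E^K},\chi_{E^K},\chi_{D^K}$ then gives $|\{(x,y)\in E^K\times E^K:x-y\in D^K\}|\le|\{(x,y)\in E\times E:x-y\in D\}|$, and comparing with the displayed inequality forces equality in Riesz's rearrangement inequality for the triple $(\chi_{E^K},\chi_{E^K},\chi_{D^K})$.

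It remains to read off $K=B$ from this equality, and this is the crux. Because $\diam D<\diam E$, the configuration is non-degenerate: one has $0<|\{(x,y)\in E^K\times E^K:x-y\in D^K\}|<|E^K|^2$, since $E^K-E^K\supseteq E^K$ is not contained in $D^K$. Hence the equality case of Riesz's rearrangement inequality applies — in the sharp form established by Burchard, which is strictly stronger than the strict version recalled in Theorem~\ref{th:riesz}, since here the post-symmetrization ``kernel'' $\chi_{D^K}$ is $K$-shaped rather than strictly radially decreasing, so Theorem~\ref{th:riesz} does not apply directly — and it forces $\chi_{E^K}$ and $\chi_{D^K}$ to coincide almost everywhere with characteristic functions of Euclidean balls whose centres are aligned; as $E^K$ occupies the two outer slots, the centre of $D^K$ must be the origin. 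Therefore the convex body $E^K$, being a dilate of $K$, is a Euclidean ball, so $K$ is a Euclidean ball, and the condition that the dilate $D^K$ of $K$ be centred at the origin forces $K$ itself to be centred at the origin; thus $K=B$. The main obstacle is precisely this invocation of Burchard's equality analysis together with the check of its non-degeneracy hypothesis; the approximation step and the two applications of Riesz's inequality are routine. (One could instead stay self-contained by letting $\diam D\to0$ and comparing the first-order terms in $\diam D$ on the two sides, which reduces everything to the affine statement that among convex bodies the Euclidean ball uniquely minimises $|K|^{-2}\int_{\bd K}\int_K|w\cdot\nu_K(x)|\,\mathrm dw\,\mathrm d\mathcal H^{n-1}(x)$ — so the difficulty is merely relocated to that geometric inequality.)
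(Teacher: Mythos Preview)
The paper does not contain a proof of this theorem; it is stated without proof as a result of Van Schaftingen, so there is no in-paper argument to compare your proposal against.

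That said, your argument is sound in outline. The approximation step reducing to characteristic functions is routine. The heart of the proof---testing the hypothesis with centred Euclidean balls $E$ and $D$, combining it with the classical Riesz inequality applied to the triple $(\chi_{E^K},\chi_{D^K},\chi_{E^K})$ so as to force equality, and then reading off the shape of $K$ from Burchard's characterisation of the equality cases---is correct. The strict-admissibility hypothesis in Burchard's theorem requires the radii $(R,R,r)$ of $(E,E,D)$ to satisfy strict triangle inequalities, i.e.\ $0<r<2R$; your choice $\diam D<\diam E$ certainly ensures this, and your non-triviality check $0<|\{(x,y)\in E^K\times E^K:x-y\in D^K\}|<|E^K|^2$ is fine. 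Burchard's conclusion then says that $E^K$ and $D^K$ are, up to null sets, Euclidean balls whose centres satisfy the additive compatibility condition; since $E^K$ and $D^K$ are positive dilates of $K$ from the origin, $K$ is a ball, and (as $E^K$ appears in both outer slots) the centre compatibility forces the centre of $D^K$, hence of $K$, to be the origin. Your handling of the scaling ambiguity ($K=\lambda B$ versus $K=B$) via the scale-invariance of $\cdot^K$ is also correct.

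The one caveat is that everything hinges on Burchard's equality theorem, which is external to this paper and strictly stronger than the decreasing-kernel version recorded here as Theorem~\ref{th:riesz}; you flag this explicitly, which is appropriate. Your alternative sketch via $\diam D\to 0$ would require a separate affine-geometric inequality as input and is not obviously simpler, so I would not present it as a self-contained fallback.
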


%Van Schaftingen \cite{vanschaft} showed that inequality \eqref{eq:rieszrearr} cannot hold for other symmetrizations other than Steiner symmetrization.

	%A function $\rho: \Real^n \setdiff \simpleset{o} \to (0,\infty)$ is called \emph{radial function} if it is continuous and positively homogeneous of degree $-1$, i.e. if $x \in \Real^n \setdiff \simpleset{o}$ and $\lambda > 0$, then $\rho(\lambda x) = \lambda^{-1} \rho(x)$.
	%It is well known that every radial function uniquely defines a compact set $L$ which is starshaped with respect to $o \in \text{int } L$ by
	%\begin{equation*}
	%	L = \set{x \in \Real^n \setdiff \simpleset{o}}{\rho(x) \ge 1} \cup \simpleset{o}.
	%\end{equation*}

\subsection{Anisotropic fractional perimeters}
\label{sec:perimeters}

We denote by $C_c^1(\Real^n;\Real^n)$ the space of all compactly supported and continuously differentiable vector fields $T: \Real^n \to \Real^n$.
We first recall the definition of perimeter and its anisotropic version.

\begin{definition}
  Let $E \subseteq \Real$ be a Borel set.
  \begin{enumerate}
    \item
      The \emph{(Euclidean) perimeter} of $E$ is defined by
      \begin{equation*}
	P(E):= \sup \set{\integral[E]{\div T}}{T \in C_c^1(\Real^n;\Real^n), |T| \le 1}.
      \end{equation*}
    \item
      Let $K \subset \Real^n$ be the closed unit ball of the norm $\norm[K]{\cdot}$.
      The \emph{anisotropic perimeter} of $E$ with respect to $K$ (cf. \cite{amar}) is defined by
      \begin{equation*}
	P(E,K):= \sup \set{\integral[E]{\div T}}{T \in C_c^1(\Real^n;\Real^n), \norm[K]{T} \le 1}.
      \end{equation*}
  \end{enumerate}
\end{definition}

If $K=B$ is chosen in the definition of the anisotropic perimeter, then we recover the standard perimeter.
In this sense, the anisotropic perimeter can be understood as a generalization of its Euclidean counterpart.
Anisotropic fractional and Euclidean fractional perimeters are related the same way.

In the following, we list some properties of geometric interest which all perimeter functionals we have presented so far have in common.
For their proofs we refer to \cite{maggi}, and \cite{cesaroni} for the fractional versions.
To provide a simple unified notation for these functionals, let $\mathcal P_s$ denote the anisotropic fractional perimeter with respect to $K$ if $s \in (0,1)$ and the anisotropic perimeter with respect to $K$ if $s=1$\footnote{The integrals in the definition of the anisotropic fractional perimeter $P_s(E,K)$ do not converge for $s = 1$, unless $E$ or $\compl E$ is a set of measure 0 (cf. \cite{brezis}), which justifies the need for the new notation $\mathcal P_s$. However, Theorem \ref{th:anisobbm} shows that the anisotropic perimeter is the endpoint in the scale of anisotropic fractional perimeters in a certain sense.}.
Let $E \subseteq \Real^n$ be a Borel set.
Then,
\begin{itemize}
  \item $\mathcal P_s (E) = \mathcal P_s (\compl E)$,
  \item $\mathcal P_s$ is invariant under translations, i.e. if $y \in \Real^n$, then $\mathcal P_s(E+y) = \mathcal P_s (E)$, where $E+y := \set{x+y}{x \in E}$.

    If $K = B$, then $\mathcal P_s$ is also invariant under rotations, i.e. if $\theta \in SO(n)$ is a rotation, then $\mathcal P_s(\theta E) = \mathcal P_s(E)$, where $\theta E := \set{\theta x}{x \in E}$,
  \item
    $\mathcal P_s$ is $(n-s)$-homogeneous, i.e. if $\lambda > 0$, then $\mathcal P_s(\lambda E) = \lambda^{n-s} \mathcal P_s(E)$, where $\lambda E := \set{\lambda x}{x \in E}$,
  \item $\mathcal P_s$ is lower semicontinuous with respect to $L^1(\Real^n)$-convergence, i.e. if $\integral[\Real^n]{|\chi_{E_i}-\chi_E|} \to 0$ for Borel sets $E_i, E \subseteq \Real^n$ as $i \to \infty$, then $\displaystyle \mathcal P_s(E) \le \liminf_{i \to \infty} \mathcal P_s(E_i)$.
\end{itemize}

Anisotropic fractional perimeters are related to anisotropic perimeters by the following formula, which was first shown in \cite{bbm} and \cite{davila} in the isotropic case and extended in \cite{ludwig-perimeter} to general unit balls $K$.
%The following result shows the surprising behaviour of anisotropic fractional perimeters in the limit as $s \to 1$:

\begin{theorem}[{\cite[Theorem 4]{ludwig-perimeter}}]
  \label{th:anisobbm}
  Let $E \subset \Real^n$ be a bounded Borel set of finite perimeter.
  Then
  \begin{equation*}
    \lim_{s \to 1} (1-s)P_s(E,K) = P(E,ZK),
  \end{equation*}
  where $ZK$ is the moment body of $K$ given by
  \begin{equation*}
    \norm[Z^\circ K]{v} = \frac{n+1}{2} \integral[K]{|v \cdot x|},\ v \in \Real^n.
  \end{equation*}
\end{theorem}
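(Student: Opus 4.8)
The plan is to carry out the Bourgain--Brezis--Mironescu / D\'avila asymptotic analysis for the anisotropic kernel $\norm[K]{z}^{-(n+s)}$ and then to recognise the limiting surface integral as an anisotropic perimeter. First I would put $P_s(E,K)$ into translation-invariant form. Since $K$ is origin-symmetric, $\norm[K]{x-y}=\norm[K]{y-x}$; using $|\chi_E(x)-\chi_E(y)|=\chi_E(x)\chi_{\compl E}(y)+\chi_{\compl E}(x)\chi_E(y)$ and substituting $z=y-x$ gives
\[
  P_s(E,K)=\frac12\int_{\Real^n}\int_{\Real^n}\frac{|\chi_E(x)-\chi_E(x+z)|}{\norm[K]{z}^{n+s}}\,\mathrm dx\,\mathrm dz .
\]
Writing $\varphi_\nu(t):=\int_{\Real^n}|\chi_E(x)-\chi_E(x+t\nu)|\,\mathrm dx$ for $t>0$ and $\nu\in\unitsphere n$, and passing to polar coordinates $z=t\nu$ (so that $\norm[K]{z}=t\norm[K]{\nu}$ and $\mathrm dz=t^{n-1}\,\mathrm dt\,\mathrm d\haumeas{n-1}(\nu)$), this becomes
\[
  (1-s)P_s(E,K)=\frac{1-s}{2}\int_{\unitsphere n}\norm[K]{\nu}^{-(n+s)}\Big(\int_0^\infty t^{-1-s}\varphi_\nu(t)\,\mathrm dt\Big)\,\mathrm d\haumeas{n-1}(\nu).
\]

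Next I would bring in the standard $BV$ facts on directional difference quotients. For $E$ of finite perimeter and each $\nu\in\unitsphere n$ one has $\varphi_\nu(t)\le t\,V_\nu$ for all $t>0$ and $\varphi_\nu(t)/t\to V_\nu$ as $t\to0^+$, where $V_\nu:=\int_{\bd^*E}|\nu\cdot\nu_E(x)|\,\mathrm d\haumeas{n-1}(x)\le P(E)$ and $\nu_E$ is the measure-theoretic unit normal of $E$; trivially $\varphi_\nu(t)\le 2|E|$. A one-variable lemma then gives that any $\psi\ge0$ with $\psi(t)\le Ct$ near $0$, $\psi$ bounded, and $\psi(t)/t\to L$ satisfies $(1-s)\int_0^\infty t^{-1-s}\psi(t)\,\mathrm dt\to L$ as $s\to1^-$: splitting at a small $\delta$ with $|\psi(t)/t-L|<\varepsilon$ on $(0,\delta)$, the contribution of $(0,\delta)$ lies between $(L\pm\varepsilon)\delta^{1-s}$ with $\delta^{1-s}\to1$, while the contributions of $(\delta,1)$ and $(1,\infty)$ are $O(1-s)$. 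Applying this with $\psi=\varphi_\nu$ yields, for each $\nu$, $(1-s)\int_0^\infty t^{-1-s}\varphi_\nu(t)\,\mathrm dt\to V_\nu$; the same splitting bounds this quantity by $P(E)+2|E|$ for $s$ near $1$, and $\norm[K]{\nu}^{-(n+s)}=\rho_K(\nu)^{n+s}$ is bounded on $\unitsphere n$ uniformly in $s$ since $K$ is compact. Dominated convergence over the compact sphere then gives
\[
  \lim_{s\to1}(1-s)P_s(E,K)=\frac12\int_{\unitsphere n}\norm[K]{\nu}^{-(n+1)}\Big(\int_{\bd^*E}|\nu\cdot\nu_E(x)|\,\mathrm d\haumeas{n-1}(x)\Big)\,\mathrm d\haumeas{n-1}(\nu).
\]

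Finally I would use Fubini to exchange the two integrations and identify the inner spherical integral. For fixed $w\in\Real^n$, polar coordinates on $K$ (with radial function $\rho_K(\nu)=\norm[K]{\nu}^{-1}$) give $\int_K|w\cdot y|\,\mathrm dy=\frac1{n+1}\int_{\unitsphere n}|w\cdot\nu|\,\norm[K]{\nu}^{-(n+1)}\,\mathrm d\haumeas{n-1}(\nu)$, so that
\[
  \frac12\int_{\unitsphere n}|w\cdot\nu|\,\norm[K]{\nu}^{-(n+1)}\,\mathrm d\haumeas{n-1}(\nu)=\frac{n+1}{2}\int_K|w\cdot y|\,\mathrm dy=\norm[Z^\circ K]{w},
\]
the last step being the definition of $Z^\circ K$ (which is readily checked to be an origin-symmetric convex body, so $ZK=(Z^\circ K)^\circ$ is one too). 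Hence the limit equals $\int_{\bd^*E}\norm[Z^\circ K]{\nu_E(x)}\,\mathrm d\haumeas{n-1}(x)$. Since for a convex body $L\ni o$ the gauge $\norm[L^\circ]{\cdot}$ coincides with the support function $h_L$, and since for every set of finite perimeter $P(E,L)=\int_{\bd^*E}h_L(\nu_E)\,\mathrm d\haumeas{n-1}$ (pointwise maximisation of $T\cdot\nu_E$ over $T(x)\in L$ in the definition of $P(E,L)$, together with the density of $C^1_c$ fields), the right-hand side is exactly $P(E,ZK)$, as claimed.

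The step I expect to be the main obstacle is the limit passage in the second paragraph: combining the directional difference-quotient asymptotics $\varphi_\nu(t)/t\to V_\nu$ with $s\to1$ and, above all, justifying the interchange with $\int_{\unitsphere n}$. This is precisely where one needs the uniform-in-$\nu$ bounds $\varphi_\nu(t)\le\min(2|E|,tP(E))$, the compactness of $\unitsphere n$, and the boundedness of $\rho_K$ on $\unitsphere n$.
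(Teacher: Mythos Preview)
The paper does not supply its own proof of this statement: Theorem~\ref{th:anisobbm} is quoted verbatim from \cite[Theorem~4]{ludwig-perimeter} as background material in Section~\ref{sec:perimeters}, and no proof environment follows it. There is therefore nothing in the present paper to compare your argument against.

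That said, your reconstruction is a faithful outline of the standard Bourgain--Brezis--Mironescu / D\'avila scheme specialised to the anisotropic kernel, and this is essentially how the cited result is obtained. The translation-invariant rewriting, the polar decomposition with the factor $\norm[K]{\nu}^{-(n+s)}$, the directional $BV$ estimates $\varphi_\nu(t)\le \min(2|E|,\,tP(E))$ with $\varphi_\nu(t)/t\to V_\nu$, the one-variable asymptotic lemma, and the final identification of $\tfrac12\int_{\unitsphere n}|w\cdot\nu|\,\norm[K]{\nu}^{-(n+1)}\,\mathrm d\haumeas{n-1}(\nu)$ with $\norm[Z^\circ K]{w}$ via polar coordinates in $K$ are all correct and are exactly the ingredients used in \cite{ludwig-perimeter}. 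Your concluding identification $P(E,ZK)=\int_{\bd^*E}\norm[Z^\circ K]{\nu_E}\,\mathrm d\haumeas{n-1}$ via the support-function representation of the anisotropic perimeter is also the standard one.
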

Here, $Z^\circ K \subset \Real^n$ is the polar body of $ZK$ defined by
\begin{equation*}
  Z^\circ K := \set{y \in \Real^n}{x \cdot y \le 1 \text{ for all } x \in ZK}.
\end{equation*}

%The isoperimetric problem for anisotropic fractional perimeters consists of finding all sets $E \subset \Real^n$ which minimize $P_s(E,K)$ among all bounded sets of given volume, i.e. for $m > 0$ the \emph{anisotropic fractional isoperimetric problem} is given by
Finding the minimizers of the anisotropic fractional isoperimetric inequality \eqref{eq:anfracisoineq} is equivalent to finding all sets for which
\begin{equation}
  \label{eq:anisofracisoprob}
  \gamma_{n,s}(K) = \inf \set{P_s(E,K)|E|^{-\frac{n-s}{n}}}{E \subset \Real^n \text{ bounded}, |E|>0}.
\end{equation}
is attained.
%By homogeneity, it suffices to determine all minimizers for a fixed bounded set $\Omega$, since the minimizers for a different $\Omega$ can be fully obtained by translating and rescaling.
For the isotropic version $K = B$ the minimizers of the isoperimetric inequality \eqref{eq:anfracisoineq} are given by sets equivalent to Euclidean balls.
By equivalence of norms, there exists constants $\alpha \le \beta$ such that
\begin{equation*}
  \alpha P_s(E) \le P_s(E,K) \le \beta P_s(E)
\end{equation*}
for all Borel sets $E \subseteq \Real^n$.
Thus $0 < \gamma_{n,s}(K) < \infty$ and there exists a sequence of Borel sets $E_i$ contained in a ball $B_R \subset \Real^n, R > 0,$ with $|E_i|=m$ and $\displaystyle \gamma_{n,s}(K) = \lim_{i \to \infty} P_s(E_i,K)$.
By the Frechet-Kolmogorov compactness theorem, there exists a limit $E \subset \Real^n$ of this sequence with respect to $L_{loc}^1(B_R)$-convergence which is a minimizer of \eqref{eq:anisofracisoprob}, see \cite[(4)]{ambrosio-gamma}.

We remark that, as a consequence of Theorem \ref{th:anisobbm}, minimizers of the anisotropic fractional isoperimetric inequality \eqref{eq:anfracisoineq} are in general not homothetic to $K$ up to sets of measure 0 (cf. \cite[Theorem 7]{ludwig-perimeter}).

Closely related to fractional perimeters are fractional Sobolev spaces.
For $0<s<1$ and $1 \le p < \infty$ the fractional Sobolev seminorm of a measurable function $f: \Real^n \to \Real$ is defined as
\begin{equation}
  \label{eq:fracnormdef}
  [f]_{s,p} := \left( \integral[\Real^n]{
    \integral[\Real^n]{
      \frac{|f(x)-f(y)|^p}{|x-y|^{n+sp}}
    }[y]
  } \right)^{\frac 1 p}.
\end{equation}
The fractional Sobolev space $W^{s,p}(\Real^n)$ consists of all functions $f \in L^p(\Real^n)$ for which this seminorm is finite, i.e. $[f]_{s,p} < \infty$.
The interested reader is refered to the introductory article \cite{hitchhiker} for more information on fractional Sobolev spaces.
The anisotropic fractional seminorm, introduced by Ludwig \cite{ludwig-norm}, is obtained by replacing the Euclidean norm in \eqref{eq:fracnormdef} by an arbitrary norm $\norm[K]{\cdot}$ with unit ball $K$,
\begin{equation*}
  [f]_{s,p,K} := \left( \integral[\Real^n]{
    \integral[\Real^n]{
      \frac{|f(x)-f(y)|^p}{\norm[K]{x-y}^{n+sp}}
    }[y]
  } \right)^{\frac 1 p}.
\end{equation*}
The anisotropic fractional perimeter of a Borel set $E \subseteq \Real^n$ can be expressed in terms of seminorms by $[\chi_E]_{s,1,K} = 2 P_s(E,K)$.
On the other hand, the $W^{s,1}$-seminorm of a function can be computed by the perimeters of its level sets via the following coarea formula which was shown by Visintin \cite{visintin} (see also \cite[Lemma 10]{ambrosio-gamma} and \cite[(23)]{ludwig-perimeter}):

\begin{theorem}[generalized coarea formula]
  \label{th:coarea}
  For $f \in L^1(\Real^n)$,
  \begin{equation*}
    \integral[\Real^n]{
      \integral[\Real^n]{
	\frac{|f(x)-f(y)|}{\norm[K]{x-y}^{n+s}}
      }[y]
    } = 2 \integral[0][\infty]{P_s(\simpleset{|f|>\tau},K)}[\tau].
  \end{equation*}

\end{theorem}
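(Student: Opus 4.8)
The plan is to obtain the identity from a pointwise layer-cake formula combined with Tonelli's theorem; no integrability of $f$ beyond $f \in L^1(\Real^n)$ is required, since both sides are read as elements of $[0,+\infty]$. As a preliminary step I would check that the right-hand side is meaningful: by Chebyshev's inequality $|\simpleset{|f|>\tau}| \le \norm[L^1(\Real^n)]{f}/\tau < \infty$ for every $\tau>0$, so each $P_s(\simpleset{|f|>\tau},K)$ is a well-defined element of $[0,+\infty]$, and $\tau\mapsto P_s(\simpleset{|f|>\tau},K)$ is Borel measurable: the map $(\tau,x)\mapsto\chi_{\simpleset{|f|>\tau}}(x)$ is jointly Borel measurable, hence $(\tau,x,y)\mapsto|\chi_{\simpleset{|f|>\tau}}(x)-\chi_{\simpleset{|f|>\tau}}(y)|\,\norm[K]{x-y}^{-(n+s)}$ is, and Tonelli gives measurability of its double integral in $x,y$, which by $[\chi_E]_{s,1,K}=2P_s(E,K)$ equals $2P_s(\simpleset{|f|>\tau},K)$.

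The core of the argument is the pointwise identity
\[
  |f(x)-f(y)| \;=\; \int_0^\infty \big|\,\chi_{\simpleset{|f|>\tau}}(x)-\chi_{\simpleset{|f|>\tau}}(y)\,\big|\,\mathrm d\tau,
\]
which I would establish for $f\ge 0$ by combining the ordinary layer-cake representation $f(z)=\int_0^\infty\chi_{\simpleset{f>\tau}}(z)\,\mathrm d\tau$ with the observation that, for fixed $x,y$, the integrand $\chi_{\simpleset{f>\tau}}(x)-\chi_{\simpleset{f>\tau}}(y)$ has sign $\sgn(f(x)-f(y))$ for \emph{every} $\tau$ — it equals $1$ for $\tau$ in the half-open interval between $f(x)$ and $f(y)$, and vanishes otherwise — so the absolute value may be pulled out of the $\tau$-integral. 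I would then multiply through by $\norm[K]{x-y}^{-(n+s)}$, integrate over $\Real^n\times\Real^n$, and use Tonelli (all integrands nonnegative) to bring $\int_0^\infty\cdots\,\mathrm d\tau$ outside; the resulting inner double integral equals $[\chi_{\simpleset{|f|>\tau}}]_{s,1,K}=2P_s(\simpleset{|f|>\tau},K)$, which completes the proof.

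Since the statement is phrased for all $f\in L^1(\Real^n)$ while it is invariably applied to nonnegative functions (to $|f|$, or to a nonnegative rearrangement $f^M$), I would spell out the reduction to $f\ge 0$. For a sign-changing $f$ the right replacement is the signed layer-cake identity $|f(x)-f(y)|=\int_{\Real}|\chi_{\simpleset{f>t}}(x)-\chi_{\simpleset{f>t}}(y)|\,\mathrm dt$, which by the same computation gives $[f]_{s,1,K}=2\int_{\Real}P_s(\simpleset{f>t},K)\,\mathrm dt$; splitting at $t=0$, substituting $t=-\tau$ in the negative part, and using the complement invariance $P_s(\simpleset{f>-\tau},K)=P_s(\simpleset{f<-\tau},K)$ turns this into $2\int_0^\infty\big(P_s(\simpleset{f>\tau},K)+P_s(\simpleset{f<-\tau},K)\big)\,\mathrm d\tau$, which reduces to the displayed right-hand side precisely when $f$ has a.e.\ constant sign.

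The Chebyshev bound and the Tonelli interchange are routine; the step needing genuine care is the pointwise layer-cake identity, and specifically the sign bookkeeping that justifies moving the absolute value outside the $\tau$-integral — relatedly, in the sign-changing case one must remember that $P_s(\cdot,K)$ is not additive over disjoint unions, so $\simpleset{|f|>\tau}$ cannot be treated as one level set but only as the disjoint pair $\simpleset{f>\tau}\cup\simpleset{f<-\tau}$. I would avoid a density/approximation argument, since $P_s(\cdot,K)$ is only lower semicontinuous, not continuous, along $L^1$-convergent sequences of level sets.
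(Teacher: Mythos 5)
The paper gives no proof of this theorem; the statement is imported by citation (Visintin, Ambrosio--De Philippis--Martinazzi, Ludwig), so there is no in-text argument to compare yours against. Your route --- the pointwise layer-cake identity $|f(x)-f(y)|=\int_0^\infty\bigl|\chi_{\{f>\tau\}}(x)-\chi_{\{f>\tau\}}(y)\bigr|\,\mathrm d\tau$ for $f\ge 0$, established by checking that the integrand has a $\tau$-independent sign, followed by Tonelli and $[\chi_E]_{s,1,K}=2P_s(E,K)$ --- is the standard proof and is correct.

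Your remark about sign-changing $f$ is a genuine catch and worth recording. For disjoint Borel sets $A,B$ of positive finite measure one has
\begin{equation*}
P_s(A\cup B,K)=P_s(A,K)+P_s(B,K)-2\integral[A]{\integral[B]{\norm[K]{x-y}^{-(n+s)}}[y]} < P_s(A,K)+P_s(B,K),
\end{equation*}
so for $f$ changing sign on a set of positive measure the signed layer-cake computation gives $[f]_{s,1,K}=2\int_0^\infty\bigl(P_s(\{f>\tau\},K)+P_s(\{f<-\tau\},K)\bigr)\,\mathrm d\tau$, which is \emph{strictly} larger than $2\int_0^\infty P_s(\{|f|>\tau\},K)\,\mathrm d\tau$. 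The theorem as literally stated is therefore an equality only for $f$ of a.e.\ constant sign; the version valid for all $f\in L^1(\Real^n)$ has $2\int_{-\infty}^{\infty}P_s(\{f>t\},K)\,\mathrm dt$ on the right-hand side. The imprecision is harmless for the paper's single use of the theorem, in the proof of the final Proposition: there it is applied once to a general $f$, where the (always valid) inequality $\ge$ suffices, and once to the rearrangement $f^M$, which is nonnegative by construction, so equality does hold.
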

Finally, let us remark that anisotropic fractional perimeters belong to the larger class of \emph{nonlocal} perimeters, as introduced in \cite{cesaroni}.
Let $k: \Real^n \to [0,\infty)$ be a measurable function such that $\min(|\cdot|,1)k \in L^1(\Real^n)$.
  Then the nonlocal perimeter $\Per_k$ is defined for Borel sets $E \subseteq \Real^n$ as
  \begin{equation*}
   \Per_k(E) := \integral[E]{
      \integral[\compl E]{
	k(x-y)
      }[y]
    }.
    \end{equation*}
    In contrast to the anisotropic fractional isoperimetric problem, it is not known in general if there exist minimizers for the nonlocal isoperimetric problem,
    \begin{equation} 
      \label{eq:nonlocalip}
      \inf \set{\Per_k(E)}{E \subset \Real^n \text{ bounded}, |E| = m},
    \end{equation}
    where $m > 0$ is fixed.
    For partial results on the existence of minimizers we refer to \cite{cesaroni}.

    We will state and prove all results in the following sections for anisotropic fractional perimeters and mention when analogous statements hold for general nonlocal perimeters.

%The fractional Sobolev space $W^{s,p}(\Real^n)$ is then defined by
%\begin{equation*}
  %W^{s,p}(\Real^n) := \set{f \in L^p(\Real^n)}{[f]_{s,p} < \infty},
%\end{equation*}
%and $[\cdot]_{s,p}$ is a seminorm on this space.
%It is a Banach space when endowed with the norm
%\begin{equation*}
  %\norm[s,p]{f} := \left( \norm[p]{f}^p + [f]_{s,p}^p \right)^{\frac 1 p},
%\end{equation*}
%where $\norm[p]{f} := \left( \integral[\Real^n]{|f(x)|^p} \right)^{\frac 1 p}$ is the $L^p$-norm on $\Real^n$.

\subsection{P\'olya-Szeg{\H o} inequalities}
\label{sec:polya}

In this section we recall some facts on P\'olya-Szeg{\H o} inequalities for different seminorms and their connection to isoperimetric inequalities.
For the classical $W^{1,p}$-seminorm, $1 \le p < \infty$, P\'olya \& Szeg{\H o} \cite{polyaszegoe} showed that if $f \in W^{1,p}(\Real^n)$, then
  \begin{equation*}
    \integral[\Real^n]{|\nabla f(x)|^p} \ge \integral[\Real^n]{|\nabla f^*(x)|^p},
  \end{equation*}
  where $f^*$ is the symmetric decreasing rearrangement of $f$.
  This inequality can be derived from the isoperimetric inequality together with the coarea formula.
  Alvino et al.\!\!\!\! \cite{alvino} introduced the notion of convex symmetrization in order to prove the anisotropic P\'olya-Szeg{\H o} inequality for $f \in W^{1,p}(\Real^n)$,
  \begin{equation*}
    \integral[\Real^n]{\norm[K^\circ]{\nabla f(x)}^p} \ge
    \integral[\Real^n]{\norm[K^\circ]{\nabla f^K(x)}^p},
  \end{equation*}
  where the symmetrization is taken with respect to $K$.
  The choice of this symmetrization stems from the equality cases of the anisotropic isoperimetric inequality $P(E,K) \ge n |K|^{\frac 1 n} |E|^{\frac{n-1} n}$ where equality holds if and only if $E$ is equivalent to a set homothetic to $K$ (see e.g. \cite{taylor}).
  
  For fractional seminorms, a P\'olya-Szeg{\H o} principle was first proved by Almgren \& Lieb \cite{almgrenlieb} and the full description of equality cases was settled by Frank \& Seiringer \cite{frankseiringer}.
  It states that if $f \in W^{s,p}(\Real^n)$, then
  \begin{equation*}
    \integral[\Real^n]{
      \integral[\Real^n]{
	\frac{|f(x)-f(y)|^p}{|x-y|^{n+sp}}
      }[y]
    } \ge 
    \integral[\Real^n]{
      \integral[\Real^n]{
	\frac{|f^*(x)-f^*(y)|^p}{|x-y|^{n+sp}}
      }[y]
    },
  \end{equation*}
  In Theorem \ref{th:seminormsteiner} of the following section we extend this inequality to Steiner symmetrization and give a full description of equality cases.

\section{Steiner symmetrization and nonlocal functionals}

%Let $E \subset \Real^n$ be a set of finite measure.

The main result of this section is a P\'olya-Szeg{\H o} inequality for anisotropic fractional seminorms, where the unit ball $K$ of the norm $\norm[K]{\cdot}$ is symmetric with respect to the hyperplane $\simpleset{x_n=0}$ and the symmetrization is Steiner symmetrization with respect to the same hyperplane.

  \begin{theorem}
	  \label{th:seminormsteiner}
	  Let $s \in (0,1)$ and $1 \le p < \infty$, and let the unit ball $K$ of the norm $\norm[K]{\cdot}$ be symmetric with respect to the hyperplane $\simpleset{x_n=0}$.
	  If $f \in W^{s,p}(\Real^n)$ and $f^{\#}$ is the Steiner symmetrization of $f$ with respect to $x_n$, then $f^{\#} \in W^{s,p}(\Real^n)$ and
	  \begin{equation}
		  \label{eq:fctsteiner}
		  \integral[\Real^n]{
			  \integral[\Real^n]{
				  \frac {|f(x)-f(y)|^p} {\norm[K]{x-y}^{n+sp}}
			  }[y]
		  } \ge 
		  \integral[\Real^n]{
			  \integral[\Real^n]{
			    \frac {|f^{\#}(x)-f^{\#}(y)|^p} {\norm[K]{x-y}^{n+sp}}
			  }[y]
		  }.
	  \end{equation}
	  Furthermore, assume that $K$ is strictly convex.
	  \begin{enumerate}[label=(\alph*)]
		  \item
		    If $p > 1$, equality holds in \eqref{eq:fctsteiner} if and only if there exists $c \in \Real$ such that for a.e. $x' \in \Real^{n-1}$
			  \begin{equation*}
			    f(x',x_n) = f^{\#}(x',x_n-c) \text{ for a.e. } x_n \in \Real.
			  \end{equation*}
		  \item
		    If $p = 1$, equality holds in \eqref{eq:fctsteiner} if and only if for almost every $\tau > 0$ there exists $c_\tau \in \Real$ such that for a.e. $x' \in \Real^{n-1}$ the level sets $\simpleset{x_n: f(x',x_n) > \tau}$ are equivalent to intervals centered around $c_\tau$.
	  \end{enumerate}
  \end{theorem}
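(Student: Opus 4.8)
The plan is to reduce the $n$-dimensional anisotropic inequality to a one-dimensional Riesz-type statement along lines parallel to the $x_n$-axis, so that Theorem~\ref{th:riesz} can be applied fiber-by-fiber. First I would rewrite the left-hand side of \eqref{eq:fctsteiner} using the elementary identity $|a-b|^p = p(p-1)\int_0^\infty\int_0^\infty |t-u|^{p-2}\,\chi_{\{a>t\}\neq\chi_{\{b>t\}}}\,\chi_{\{a>u\}\neq\chi_{\{b>u\}}}\,\mathrm dt\,\mathrm du$ for $p>1$ (and the simpler coarea/layer-cake expression $|a-b| = \int_0^\infty |\chi_{\{a>\tau\}}-\chi_{\{b>\tau\}}|\,\mathrm d\tau$ for $p=1$, via Theorem~\ref{th:coarea}). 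Since $|f(x)-f(y)|$ only sees the distribution of $f$ through its superlevel sets, and since Steiner symmetrization acts on these level sets section-by-section as $[\{f_{x'}>\tau\}]^{*}$ in the single variable $x_n$, the whole estimate decouples into: for each fixed pair $x',y'\in\Real^{n-1}$, control the interaction kernel along the fiber. The crucial structural point is that, because $K$ is symmetric in $x_n$, the function $x_n\mapsto \norm[K]{(x'-y',x_n)}^{-(n+sp)}$ is, for fixed $x'-y'$, an \emph{even} function of $x_n$ that is non-increasing in $|x_n|$ — i.e. a symmetric decreasing kernel in the one remaining variable. This is exactly the hypothesis needed to invoke the one-dimensional Riesz rearrangement inequality.

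Concretely, for $p=1$ I would write the double integral as $2\int_0^\infty P_s(\{|f|>\tau\},K)\,\mathrm d\tau$ and, for each $\tau$, express $P_s(E,K)=\iint \chi_E(x)\,k_K(x-y)\,\chi_{\compl E}(y)$ with $k_K(z)=\norm[K]{z}^{-(n-s)}$... more precisely $2P_s(E,K)=\iint|\chi_E(x)-\chi_E(y)|\,\norm[K]{x-y}^{-(n+s)}$; then, performing the $x_n,y_n$ integrations first and fixing $x'=y'$... actually the kernel couples $x'-y'$ and $x_n-y_n$ together, so I cannot literally fix $x'=y'$. The correct reduction: substitute, for fixed $v=x'-y'\in\Real^{n-1}$, the inner integral $\int_\Real\int_\Real g_\tau(x',x_n)\,k_K(v,x_n-y_n)\,g_\tau(y',y_n)\,\mathrm dx_n\,\mathrm dy_n$ where $g_\tau = \chi_{\{|f|>\tau\}}$; this is a one-dimensional Riesz functional in the variables $x_n,y_n$ with kernel $k_K(v,\cdot)$ which by the symmetry of $K$ is symmetric decreasing on $\Real$. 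Applying the one-dimensional case of Theorem~\ref{th:riesz} replaces $g_\tau(x',\cdot)$ by its decreasing rearrangement $[\,\{f_{x'}>\tau\}\,]^{*}$, which by definition is the $x'$-section of $\{f^{\#}>\tau\}$; integrating back over $v$, $x'$, $\tau$ gives \eqref{eq:fctsteiner}. For $p>1$ one runs the same argument with the two-parameter ($t,u$) layer-cake representation above, each term again being a one-dimensional Riesz functional in $(x_n,y_n)$. Finiteness of $[f^{\#}]_{s,p}$ — hence $f^{\#}\in W^{s,p}$ — follows from the inequality itself together with the fact that $\norm{f^{\#}}[p]=\norm{f}[p]$ (equimeasurability), which I would note in passing.

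For the equality cases I would use the \emph{strict} form of Riesz's inequality in Theorem~\ref{th:riesz}. When $K$ is strictly convex, the fiber kernel $x_n\mapsto\norm[K]{(v,x_n)}^{-(n+sp)}$ is \emph{strictly} symmetric decreasing for each fixed $v\neq o$ (strict convexity of $K$ forces $\norm[K]{(v,x_n)}$ to be strictly increasing in $|x_n|$ whenever $v\neq o$, since moving in the $x_n$-direction cannot stay on a flat piece of $\partial K$). Hence equality in the applied one-dimensional Riesz inequality, for a.e.\ fixed $v$ and a.e.\ fixed $x',y'$ with $x'-y'=v$, forces each section to be a translate of its own rearrangement: there is $c=c(x',x'',\dots)$ with $\chi_{\{f_{x'}>t\}}(\cdot) = \chi_{[\{f_{x'}>t\}]^*}(\cdot - c)$ a.e. The remaining work — the part I expect to be the main obstacle — is a measurability-and-consistency argument showing that the translation constant can be chosen \emph{independent of $x'$} (giving case (a)), respectively independent of $x'$ for a.e.\ fixed $\tau$ (giving case (b)): a priori the strict Riesz equality only gives, for each pair $(x',y')$, that $c(x')$ and $c(y')$ coincide, so one must propagate this along a.e.\ pair and use Fubini/connectedness of $\Real^{n-1}$ to conclude a global constant, and for $p=1$ handle the $\tau$-dependence carefully since the level sets are only defined up to null sets in $\tau$. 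I would carry this out by first fixing the $p>1$ case (where $|f(x)-f(y)|^p$ with $p>1$ makes the level-set information couple across all $\tau$, which is exactly why one gets a \emph{single} $c$), and then treating $p=1$ separately since there the $\tau$-slices decouple and only a $\tau$-dependent constant $c_\tau$ survives, matching the statement.
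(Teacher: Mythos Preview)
Your overall strategy---slice with Fubini, apply the one-dimensional Riesz inequality fiber by fiber using that $t\mapsto\norm[K]{(v,t)}^{-(n+sp)}$ is symmetric decreasing for each fixed $v$---is sound, and it is genuinely different from what the paper does. The paper never works with the singular kernel $\norm[K]{\cdot}^{-(n+sp)}$ directly; it first uses the Almgren--Lieb identity
\[
\norm[K]{x-y}^{-(n+sp)}=\frac{1}{\Gamma(\tfrac{n+sp}{2})}\int_0^\infty \alpha^{\frac{n+sp}{2}-1}e^{-\alpha\norm[K]{x-y}^2}\,d\alpha
\]
to reduce to integrable Gaussian-type kernels, and then applies a general two-function rearrangement principle (Proposition~\ref{pr:energyprop}, in the spirit of Frank--Seiringer) with $J(t)=|t|^p$. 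That proposition already packages the complement trick and the truncation argument, and for $p>1$ exploits strict convexity of $J$ to force a single center directly, without a two-parameter layer cake.

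There is, however, a real gap in your execution. You write the fiber integral as $\int_\Real\int_\Real g_\tau(x',x_n)\,k_K(v,x_n-y_n)\,g_\tau(y',y_n)\,dx_n\,dy_n$ with $g_\tau=\chi_{\{|f|>\tau\}}$, and claim Riesz gives the desired inequality. But the integrand coming from $P_s$ (or from your two-parameter layer cake) is $\chi_E(x)\chi_{\compl E}(y)$, not $\chi_E(x)\chi_E(y)$; applied as stated, Riesz would push in the \emph{wrong} direction, and in any case $\chi_{\compl E}$ has level sets of infinite measure, so Theorem~\ref{th:riesz} does not apply to it directly. The fix is to write, on each fiber with $v\ne 0$,
\[
\int_{E_{x'}}\int_{(E_{y'})^c}k_K(v,x_n-y_n)\,dy_n\,dx_n
=\norm[L^1(\Real)]{k_K(v,\cdot)}\,\mathcal L^1(E_{x'})-\int_{E_{x'}}\int_{E_{y'}}k_K(v,x_n-y_n)\,dy_n\,dx_n,
\]
observe the first term is invariant under Steiner (section lengths are preserved) and the second increases by one-dimensional Riesz. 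This step needs $k_K(v,\cdot)\in L^1(\Real)$, which holds precisely for $v\ne 0$; you should say this explicitly, since it is exactly why the paper takes the detour through the integrable kernels $e^{-\alpha\norm[K]{\cdot}^2}$. The same subtraction is needed in the $p>1$ case: your two-parameter layer cake produces cross terms $\int_{\{f>u\}}\int_{\{f>t\}^c}k$, not products of indicators of finite-measure sets. Once this is repaired, your equality discussion (propagating the common center across $x',y'$ and, for $p>1$, across the levels $t,u$) goes through, though the paper's route via strictly convex $J$ in Proposition~\ref{pr:energyprop} reaches the single center in case~(a) with less bookkeeping.
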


  We postpone the proof to the end of this section.

  From this theorem we deduce a Steiner inequality for anisotropic fractional perimeters.
  The equality cases of this fractional Steiner inequality are different from those of the classical Steiner inequality even in the isotropic case where $K$ is the Euclidean unit ball.
  If there is equality in the classical Steiner inequality $P(E) \ge P(E^{\#})$, then almost all slices $E_{x'}$ with $x' \in \Real^{n-1}$ are equivalent to intervals (cf. \cite[Theorem 14.4]{maggi}).
  However, this condition is not sufficient for equality.
  In the following fractional version equality holds precisely for sets for which almost all slices are equivalent to intervals centered around the same point.
  
  \begin{corollary}[Steiner inequality for anisotropic fractional perimeters]
    \label{steinerineq}
	  Let $E \subset \Real^n$ be a Borel set of finite measure and $K$ an origin-symmetric convex body which is symmetric with respect to the hyperplane $\simpleset{x_n=0}$.
	  If $E^{\#}$ is the Steiner symmetrization of $E$ with respect to $x_n$, then
	\begin{equation}
	  \label{eq:anisosteiner}
	  P_s(E,K) \ge P_s(E^{\#},K).
	\end{equation}
	Furthermore, assume that $K$ is strictly convex.
	Then equality holds if and only if $E$ is equivalent to a translate of $E^{\#}$.
  \end{corollary}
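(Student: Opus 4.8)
The plan is to apply Theorem~\ref{th:seminormsteiner} with $p=1$ to the characteristic function $f=\chi_E$ and to translate the resulting seminorm inequality into a perimeter inequality by means of the identity $[\chi_E]_{s,1,K}=2P_s(E,K)$, together with the fact that Steiner symmetrization of a set and of its characteristic function agree.

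First I would dispose of the degenerate cases: if $P_s(E,K)=\infty$ the inequality \eqref{eq:anisosteiner} is vacuous, and if $|E|=0$ then $E$ and $E^{\#}$ are both equivalent to $\emptyset$ and there is nothing to prove; so assume $0<|E|<\infty$ and $P_s(E,K)<\infty$. Then $\chi_E\in L^1(\Real^n)$ and $[\chi_E]_{s,1,K}=2P_s(E,K)<\infty$, so $\chi_E\in W^{s,1}(\Real^n)$. Next I would observe that $(\chi_E)^{\#}=\chi_{E^{\#}}$ up to a null set: unwinding the definition in Example~\ref{ex:fctrearr}, the level set $\set{y\in\Real}{\chi_E(x',y)>\tau}$ equals $E_{x'}$ for $\tau\in(0,1)$ and is empty for $\tau\ge 1$, hence $[(\chi_E)^{\#}]_{x'}=[E_{x'}]^*=[E^{\#}]_{x'}$ for every $x'$, and Fubini gives the claim. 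Applying the inequality \eqref{eq:fctsteiner} with $p=1$ and $f=\chi_E$ and using $[\,\cdot\,]_{s,1,K}=2P_s(\,\cdot\,,K)$ on both sides yields
\[
  2P_s(E,K)=[\chi_E]_{s,1,K}\ge[(\chi_E)^{\#}]_{s,1,K}=[\chi_{E^{\#}}]_{s,1,K}=2P_s(E^{\#},K),
\]
which is \eqref{eq:anisosteiner}; in particular $P_s(E^{\#},K)<\infty$.

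For the equality statement I additionally assume $K$ strictly convex. If $E$ is equivalent to a translate $E^{\#}+v$ of $E^{\#}$, then, since $P_s(\,\cdot\,,K)$ is unaffected by modifications on a null set (being given by a double integral) and is translation invariant, $P_s(E,K)=P_s(E^{\#}+v,K)=P_s(E^{\#},K)$, so equality holds. Conversely, suppose equality holds in \eqref{eq:anisosteiner} with finite value; then by the computation above equality holds in \eqref{eq:fctsteiner} for $f=\chi_E$, so Theorem~\ref{th:seminormsteiner}(b) applies: for a.e.\ $\tau>0$ there is $c_\tau\in\Real$ such that, for a.e.\ $x'\in\Real^{n-1}$, the set $\set{x_n\in\Real}{\chi_E(x',x_n)>\tau}$ is equivalent to an interval centered at $c_\tau$. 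But for every $\tau\in(0,1)$ this level set is exactly $E_{x'}$, and since $|E|>0$ we have $|E_{x'}|>0$ on a set of positive measure, where the center of the interval is uniquely determined; hence the $c_\tau$ coincide for a.e.\ $\tau\in(0,1)$ with a common value $c\in\Real$. Thus for a.e.\ $x'$ the slice $E_{x'}$ is equivalent to the interval of length $|E_{x'}|$ centered at $c$, i.e.\ to $[E_{x'}]^*+c=[E^{\#}]_{x'}+c$; by Fubini, $E$ is equivalent to the translate of $E^{\#}$ by $c$ in the $x_n$-direction.

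The corollary is essentially Theorem~\ref{th:seminormsteiner} specialized to characteristic functions, so no substantially new difficulty arises; the points that require care are the identification $(\chi_E)^{\#}=\chi_{E^{\#}}$, the passage from the family of centers $c_\tau$ furnished by part~(b) to a single translation vector (which uses uniqueness of the center of an interval of positive length together with Fubini), and the implicit restriction to finite perimeter in the equality case.
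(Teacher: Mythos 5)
Your proposal is correct and takes essentially the same approach as the paper, which proves the corollary in one line by noting $P_s(E,K)=\tfrac12[\chi_E]_{s,1,K}$ and invoking Theorem~\ref{th:seminormsteiner} with $p=1$, $f=\chi_E$. You merely spell out the details the paper leaves implicit, namely $(\chi_E)^{\#}=\chi_{E^{\#}}$, the fact that the level sets of $\chi_E$ are $\tau$-independent on $(0,1)$ so the centers $c_\tau$ collapse to a single $c$, and the Fubini step identifying $E$ with a translate of $E^{\#}$.
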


  \begin{proof}
	  The corollary easily follows from 
	  \begin{equation*}
	    P_s(E,K) = \frac 1 2 \integral[\Real^n]{
	      \integral[\Real^n]{
		\frac{|\chi_E(x)-\chi_E(y)|}{\norm[K]{x-y}^{n+s}}
		}[y]
	      }
	  \end{equation*}
	  and the case $p=1$ for $f=\chi_E$ in Theorem \ref{th:seminormsteiner}.
  \end{proof}

  The key result used in the proof of Theorem \ref{th:seminormsteiner} is the following general rearrangement inequality for functionals of the form
  \begin{equation*}
    \mathcal E[f,g] = \integral[\Real^m]{
      \integral[\Real^m]{
	J(f(x)-g(y))k(x-y)
      }[y]
    },
  \end{equation*}
  where $J$ is a non-negative convex function on $\Real$ and $k \in L^1(\Real^m)$ is symmetric decreasing.
  For the case $f=g$ it was proved by Frank \& Seiringer \cite{frankseiringer} and we will follow their methods closely in our proof.
  We point out that we need the statement in its full generality for two functions for the following reason:
  By Fubini we split the integrals in the definition of the seminorm,
  \begin{equation*}
		  \integral[\Real^n]{
			  \integral[\Real^n]{
				  \frac {|u(x)-u(y)|^p} {\norm[K]{x-y}^{n+sp}}
			  }[y]
			}  = \integral[\Real^{n-1}]{
			  \integral[\Real^{n-1}]{
			    \left( \integral[\Real]{
			      \integral[\Real]{
				\frac{|u_{x'}(x_n)-u_{y'}(y_n)|^p}{\norm[K]{(x'-y',x_n-y_n)}^{n+sp}}
			      }[y_n]
			    }[x_n] \right)
			  }[y']
			}[x'].
  \end{equation*}
  The expression in brackets depends on the sections $u_{x'}$ and $u_{y'}$ which are in general two different functions on $\Real$.
  %In the next step, we decompose the integration over $\Real^n$
  
  %Corollary \ref{cor:lowerdim} is a version of this inequality for Steiner symmetrization which we will apply in the proof of Theorem \ref{th:seminormsteiner}.
  
  %together with equality cases which was proved by Frank \& Seiringer \cite{frankseiringer} in the case $f=g$.
  %Our proof follows closely the methods by Frank \& Seiringer.
  %Even though there appears only one function $F$ in the definition $[F]_{s,p,K}$ of anisotropic fractional seminorms, we still need this theorem in its full generality for two functions $f$ and $g$ because we will apply it to the sections $f= F_{x'}$ and $g=F_{y'}$ for $x',y' \in \Real^{n-1}$.

  We emphasize that for the equality cases the two functions $f$ and $g$ respectively their level sets share the \emph{same} center which plays a crucial role in the discussion of minimizers for the anisotropic fractional isoperimetric inequality.

  \begin{proposition}
	  \label{pr:energyprop}
	  Let $J$ be a non-negative, convex function on $\Real$ with $J(0) = 0$ and let $k \in L^1(\Real^m)$ be a symmetric decreasing function.
	  For  non-negative measurable functions $f$ and $g$ on $\Real^m$ define
	  \begin{equation}
		  \label{eq:energydef}
		  \curlyE[f,g] := \integral[\Real^m]{
			  \integral[\Real^m]{
			  J(f(x)-g(y)) k(x-y)
		  }[y]
	  }[x]
	  \end{equation}
	  and suppose that $|\simpleset{f > \tau}|$ and $|\simpleset{g > \tau}|$ are finite for all $\tau > 0$.
	  \begin{enumerate}
		  \item
			  The functional $\curlyE$ does not increase under symmetric decreasing rearrangement, i.e.
	  \begin{equation}
	    \label{eq:Eineq}
	    \curlyE[f,g] \ge \curlyE[f^*,g^*].
	  \end{equation}

  \item
	  Furthermore, suppose that $\curlyE[f,g] < \infty$ and that $k$ is strictly symmetric decreasing.
	  \begin{enumerate}

		  \item
		    If $J$ is strictly convex then equality in \eqref{eq:Eineq} holds if and only if there exists a point $c \in \Real^m$ such that for a.e. $x \in \Real^m$
  \begin{equation*}
	  f(x) = f^*(x-c) \quad \text{and} \quad g(x) = g^*(x-c),
  \end{equation*}
  i.e. $f$ and $g$ are symmetric decreasing around the same center $c$ almost everywhere.

  \item
    If $J(t) = |t|$ then equality in \eqref{eq:Eineq} holds if and only if the level sets $\simpleset{f > \tau}$ and $\simpleset{g > \tau}$ are equivalent to balls around the same center $c_\tau \in \Real^m$ for a.e. $\tau > 0$.
  \end{enumerate}
  \end{enumerate}
  \end{proposition}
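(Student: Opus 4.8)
The plan is to linearize the integrand $J(f(x)-g(y))$ by a layer-cake decomposition so that $\curlyE$ becomes a superposition of bilinear expressions $\iint \chi_A(x)\,k(x-y)\,\chi_B(y)\,\mathrm{d}x\,\mathrm{d}y$, each controlled by Riesz's rearrangement inequality (Theorem~\ref{th:riesz}); this is the method of Frank \& Seiringer, extended from one function to two. The algebraic heart is the identity
\[
  J(a-b)=J(a)+J(-b)-\nu\big((0,a]\times(0,b]\big),\qquad a,b\ge 0,
\]
where $\nu$ is the non-negative Borel measure on $(0,\infty)^2$ determined by $\nu\big((0,a]\times(0,b]\big):=J(a)+J(-b)-J(a-b)$. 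This expression does define a non-negative measure because it vanishes whenever $a=0$ or $b=0$ and, by convexity of $J$, is non-decreasing in each argument and supermodular. Strict convexity of $J$ is exactly the statement that $J'$ is strictly increasing, which forces $\nu$ to have full support on $(0,\infty)^2$; for $J(t)=|t|$ one computes $J(a)+J(-b)-J(a-b)=2\min(a,b)$, so that $\nu$ is then twice the one-dimensional Lebesgue measure carried by the diagonal $\{s=t\}$.

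Substituting $a=f(x),\ b=g(y)$, using $\nu\big((0,f(x)]\times(0,g(y)]\big)=\int_{(0,\infty)^2}\chi_{\{f>s\}}(x)\,\chi_{\{g>t\}}(y)\,\mathrm{d}\nu(s,t)$, multiplying by $k(x-y)$ and integrating, Tonelli's theorem (all terms are non-negative) yields
\[
  \curlyE[f,g]+\int_{(0,\infty)^2}I(s,t)\,\mathrm{d}\nu(s,t)=\|k\|_1\Big(\int_{\Real^m}J(f(x))\,\mathrm{d}x+\int_{\Real^m}J(-g(y))\,\mathrm{d}y\Big),
\]
where $I(s,t):=\iint\chi_{\{f>s\}}(x)\,k(x-y)\,\chi_{\{g>t\}}(y)\,\mathrm{d}x\,\mathrm{d}y$ and $\int k(x-y)\,\mathrm{d}y=\|k\|_1$. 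It suffices to treat $\curlyE[f,g]<\infty$, and one may assume the right-hand side above finite: this is automatic when $f,g\in L^p$, i.e.\ in the application to Theorem~\ref{th:seminormsteiner} with $p>1$, and is avoidable entirely for $J(t)=|t|$ by using instead the decomposition $|f(x)-g(y)|=\int_0^\infty|\chi_{\{f>\tau\}}(x)-\chi_{\{g>\tau\}}(y)|\,\mathrm{d}\tau$ together with $\iint|\chi_A(x)-\chi_B(y)|k(x-y)\,\mathrm{d}x\,\mathrm{d}y=(|A|+|B|)\|k\|_1-2\iint\chi_A(x)k(x-y)\chi_B(y)\,\mathrm{d}x\,\mathrm{d}y$, valid for $|A|,|B|<\infty$. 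Since $J\ge 0$, the right-hand side of the displayed identity is unchanged under replacing $(f,g)$ by $(f^*,g^*)$, by equimeasurability; since $\{f>s\}^*=\{f^*>s\}$ and $k^*=k$, Riesz's inequality gives $I(s,t)\le I^*(s,t):=\iint\chi_{\{f^*>s\}}(x)k(x-y)\chi_{\{g^*>t\}}(y)\,\mathrm{d}x\,\mathrm{d}y$; and $\nu\ge 0$. Hence $\curlyE[f,g]-\curlyE[f^*,g^*]=\int_{(0,\infty)^2}(I^*-I)\,\mathrm{d}\nu\ge 0$, which is~(1).

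For~(2), assume in addition that $\curlyE[f,g]<\infty$ and $k$ is strictly symmetric decreasing. Equality in \eqref{eq:Eineq} forces $I(s,t)=I^*(s,t)$ for $\nu$-almost every $(s,t)$, so by the strict form of Riesz's inequality there is, for $\nu$-a.e.\ such pair for which $\{f>s\}$ and $\{g>t\}$ have positive measure, a common $c(s,t)\in\Real^m$ about which both level sets are equivalent to balls; conversely, given this, translating both integration variables by $c(s,t)$ shows $I(s,t)=I^*(s,t)$, and integrating against $\nu$ gives equality in \eqref{eq:Eineq}. In case~(b) ($J(t)=|t|$) the measure $\nu$ is carried by the diagonal, so this condition reads precisely: for a.e.\ $\tau>0$, the sets $\{f>\tau\}$ and $\{g>\tau\}$ are equivalent to balls about a common center $c_\tau$. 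In case~(a) ($J$ strictly convex) $\nu$ has full support, so the centering condition holds for a dense set of level pairs; using that the level sets of $f$ (and of $g$) are nested and that a ball of positive measure has a unique center, one upgrades this to the statement that the center of $\{f>s\}$ is independent of $s$, equals that of $\{g>t\}$, and the latter is independent of $t$; calling this common value $c$, the layer-cake formula gives $f(x)=f^*(x-c)$ and $g(x)=g^*(x-c)$ almost everywhere, and the converse is immediate by a change of variables. The step I expect to be the main obstacle is exactly this last upgrade in case~(a): passing from a statement holding for $\nu$-almost every pair of levels to a single center valid simultaneously for all levels of both functions. That this is strictly stronger than the levelwise statement is visible from case~(b), where no center common to all levels need exist; carrying it out uses both the genuinely two-dimensional spread of $\nu$ under strict convexity and the monotonicity of the level sets, and is the delicate point, handled as in \cite{frankseiringer}.
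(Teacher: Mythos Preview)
Your route via the two--parameter measure $\nu$ on $(0,\infty)^2$ with distribution function $(a,b)\mapsto J(a)+J(-b)-J(a-b)$ is genuinely different from the paper's and, where it applies, cleaner: the paper instead splits $J=J_++J_-$, writes $J_+(f(x)-g(y))=\int_0^\infty J_+'(f(x)-\tau)\chi_{\{g\le\tau\}}(y)\,\mathrm d\tau$, and applies Riesz to the one--parameter family obtained after replacing $\chi_{\{g\le\tau\}}$ by $1-\chi_{\{g>\tau\}}$. Your formulation has the advantage that both level sets $\{f>s\}$ and $\{g>t\}$ appear symmetrically from the start, which makes the common--center discussion in the equality cases more natural.

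The genuine gap is the one you flag yourself. Your master identity
\[
  \curlyE[f,g]+\int_{(0,\infty)^2}I\,\mathrm d\nu=\|k\|_1\Big(\int J(f)+\int J(-g)\Big)
\]
and hence the deduction $\curlyE[f,g]-\curlyE[f^*,g^*]=\int(I^*-I)\,\mathrm d\nu$ are only valid when the right--hand side is finite, and under the stated hypotheses (merely $|\{f>\tau\}|,|\{g>\tau\}|<\infty$ for $\tau>0$) this can fail even for bounded $f$: if $J_+'(0+)>0$ and $|\{f>s\}|\sim s^{-1}$ as $s\to0$, then $\int J(f)=\int_0^{\sup f}J'(s)|\{f>s\}|\,\mathrm ds=\infty$. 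Your remedies cover $J(t)=|t|$ and the application with $f\in L^p$, $p>1$, but not the proposition as stated; in particular your equality analysis in~(2) inherits the same restriction, since it rests on the same identity. The paper closes this gap by a two--step argument: first treat bounded $f,g$ (where the one--parameter pieces are finite because $J_+'(f-\tau)\le J_+'(\sup f)$ on $\{f>\tau\}$), then remove boundedness by the decomposition $\curlyE_+[f,g]=\curlyE_+[f_N,g_N]+\curlyE_+[f_u,g_u]+\text{(mixed term)}$ with $f_N=\min(f,N)$, and show that each summand separately does not increase under rearrangement; this same truncation carries the equality cases from bounded to unbounded $f,g$. Finally, the ``upgrade'' you call delicate in case~(a) is handled more directly in the paper's framework: equality at level $\tau$ together with strict monotonicity of $J_+'$ on $[0,\infty)$ gives the full pointwise identity $f(x)=f^*(x-c_\tau)$ (not just that one level set is a ball), which forces $c_\tau$ to be constant immediately; the match with the $g$--center then comes from comparing the two halves $\curlyE_+$ and $\tilde\curlyE_+$, without needing a density argument in the support of $\nu$.
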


  \begin{proof}
    Throughout the proof we assume that $\curlyE[f,g] < \infty$ since otherwise the inequality \eqref{eq:Eineq} holds trivially.
	  
	  First, we decompose $J$ into
	  \begin{equation*}
		  J = J_+ + J_-
	  \end{equation*}
	  where $J_+(t) = J(t)$ for $t \ge 0$ and $J_+(t)=0$ for $t \le 0$.
	  Correspondingly, $\curlyE$ can be decomposed into $\curlyE = \curlyE_+ + \curlyE_-$.
	  Since $\curlyE_-[f,g] = \tilde \curlyE_+[g,f]$ where the corresponding function $\tilde J_+(t) := J_-(-t)$ vanishes for $t \le 0$ we only need to show the assumptions for the functional $\curlyE_+$.
	  %We only carry out the proof for $E_+$ since for $E_-$ the arguments are similar.
	  The proof consists of two steps:
	  In the first step we prove all assertions for bounded $f$ and $g$ and in the second step we remove the restriction that the functions are bounded.
	  In both cases, the essential tool will be Riesz's rearrangement inequality, Theorem \ref{th:riesz}.

	  {\it Step 1:} We assume first that $f$ and $g$ are bounded.
	  Since $J_+$ is convex, the right derivative $J_+'$ exists everywhere and is non-decreasing.
	  So we can express $J_+(f(x)-g(y))$ as integral via
	  \begin{equation*}
		  J_+(f(x)-g(y)) = \integral[-\infty][f(x)-g(y)] {J_+'(s)}[s] = \integral[0][\infty] {J_+'(f(x)-\tau) \chi_{\simpleset{g \le \tau}}(y)}[\tau].
	  \end{equation*}
	  By Fubini's theorem
	  \begin{equation*}
		  \curlyE_+[f,g] = \integral[0][\infty]{e_\tau^+[f,g]}[\tau]
	  \end{equation*}
	  where
	  \begin{equation*}
		  e_\tau^+[f,g] := \integral[\Real^m]{
			  \integral[\Real^m]{
				  J_+'(f(x)-\tau) k(x-y) \chi_{\simpleset{g \le \tau}}(y)
			  }[y]
		  }.
	  \end{equation*}
	  Note that we cannot apply the Riesz rearrangement inequality yet since the level sets $\simpleset{\chi_{\simpleset{g \le \tau}} > t}$ have infinite measure for $t < 1$.
	  Instead by the boundedness of $f$ and
	  \begin{equation*}
		  \integral[\Real^m]{
			  J_+'(f(x)-\tau)
		  } = \integral[\simpleset{f > \tau}]{
			  J_+'(f(x)-\tau)
		  } \le |\simpleset{f > \tau}| J_+'(\sup f) < \infty
	  \end{equation*}
	  we can split the integral in $e_\tau^+[f,g]$ using $\chi_{\simpleset{g \le \tau}}(y) = 1 - \chi_{\simpleset{g > \tau}}(y)$, so
	  \begin{equation*}
		  e_\tau^+[f,g] = \norm[L^1]{k} \integral[\Real^m]{J_+'(f(x)-\tau)} -
		  \integral[\Real^m]{
			  \integral[\Real^m]{
				  J_+'(f(x)-\tau) k(x-y) \chi_{\simpleset{g > \tau}}(y)
			  }[y]
		  }.
	  \end{equation*}
	  Since $J_+'$ is non-decreasing, the first integral does not change by replacing $f$ with $f^*$, and for the second integral Riesz's rearrangment inequality gives
	  \begin{equation*}
		  \integral[\Real^m]{
			  \integral[\Real^m]{
				  J_+'(f(x)-\tau) k(x-y) \chi_{\simpleset{g > \tau}}(y)
			  }[y]
		  } \le \integral[\Real^m]{
			  \integral[\Real^m]{
				  J_+'(f^*(x)-\tau) k(x-y) \chi_{\simpleset{g^* > \tau}}(y)
			  }[y]
		  }.
	  \end{equation*}
	  Together with the same argument for $\tilde \curlyE_+[g,f]$ this proves inequality \eqref{eq:Eineq} for bounded functions.

	  Next we settle the conditions for equality in this case:
	  For a.e. $\tau > 0$ we have
	  \begin{align}
		  \label{eq:strictrearr}
		  \integral[\Real^m]{
			  \integral[\Real^m]{
				  J_+'(f(x)-\tau) k(x-y) \chi_{\simpleset{g > \tau}}(y)
			  }[y]
		  } &= \integral[\Real^m]{
			  \integral[\Real^m]{
				  J_+'(f^*(x)-\tau) k(x-y) \chi_{\simpleset{g^* > \tau}}(y)
			  }[y]
		  }, \\
		  \label{eq:strictrearrtilde}
		  \integral[\Real^m]{
			  \integral[\Real^m]{
				  \tilde J_+'(g(x)-\tau) k(x-y) \chi_{\simpleset{f > \tau}}(y)
			  }[y]
		  } &= \integral[\Real^m]{
			  \integral[\Real^m]{
				  \tilde J_+'(g^*(x)-\tau) k(x-y) \chi_{\simpleset{f^* > \tau}}(y)
			  }[y]
		  }.
	  \end{align}
	  If we assume $k$ to be strictly decreasing, then by the equality cases in Riesz's rearrangement inequality, Theorem \ref{th:riesz}, there must exist $c_\tau, d_\tau \in \Real^m$ such that up to sets of measure zero
	  \begin{align*}
		  \text{by \eqref{eq:strictrearr}} && J_+'(f(x)-\tau) = J_+'(f^*(x-c_\tau)-\tau) \quad & \text{and} \quad \simpleset{g > \tau} = \set{x}{g^*(x-c_\tau) > \tau},  \\
		  \text{by \eqref{eq:strictrearrtilde}} && \tilde J_+'(g(x)-\tau) = \tilde J_+'(g^*(x- d_\tau)-\tau) \quad & \text{and} \quad \simpleset{f > \tau} = \set{x}{f^*(x- d_\tau) > \tau},   
		  %
		  %\simpleset{f > \tau} = \set{x}{f^*(x-c_\tau) > \tau} \quad \text{and} \quad \simpleset{g > \tau} = \set{y}{g^*(y- \tilde c_\tau) > \tau},
	  \end{align*}
	  so the level sets of $f$ and $g$ are equivalent to balls for a.e. $\tau > 0$.
	  If furthermore $J$ is strictly convex and thus $J_+'$ and $\tilde J_+'$ are strictly increasing on $[0,\infty)$, from the first conditions in \eqref{eq:strictrearr} and \eqref{eq:strictrearrtilde} we deduce that $f(x) = f^*(x-c_\tau)$ and $g(x) = g^*(x-d_\tau)$ almost everywhere.
	    Since these equalities hold true for almost every $\tau > 0$, the centers $c_\tau$ and $d_\tau$ do not depend on $\tau$ and we simply write $c$ and $d$ for them.
	    On one hand, by $f(x) = f^* (x-c)$ the level sets of $f$ are equivalent to balls centered around $c$, but on the other hand, by the second statement in \eqref{eq:strictrearrtilde} almost all level sets are centered around $d$ which is only possible if $c=d$.
	    
	    %and see that $c_\tau$ and $d_\tau$ do not depend on $\tau$ as well as $c_\tau = d_\tau$. %which provides the description of equality cases for strictly convex $J$.
	    If $J(t) = |t|$, then the first equality in \eqref{eq:strictrearr} and $J_+'(t) = \chi_{[0,\infty)}(t)$ imply that for a.e. $\tau > 0$ it holds that  $\simpleset{f > \tau} = \set{x}{f^* (x-c_\tau) > \tau}$, so the level sets are equivalent to balls centered around $c_\tau$.
	      But the second statement in \eqref{eq:strictrearrtilde} implies that these level sets are also centered around $d_\tau$ which can only happen if $c_\tau = d_\tau$.
	  \\

	  {\it Step 2:} We now remove the assumption that $f$ and $g$ are bounded.
	  We put $f_N := \min(f,N)$ for $N > 0$ and notice that $(f_N)^* = (f^*)_N =: f_N^*$ as well as $f_N \nearrow f$ pointwise as $N \to \infty$. %then we can decompose $f = f_N + u_f$ and $g = g_N + u_g$. 
	  Since for every $x, y \in \Real^m$ the expression $J_+(f_N(x) - g_N(y))$ is non-decreasing in $N$, by step 1 and the monotone convergence theorem we get the inequality
	  \begin{equation*}
		  \curlyE_+[f,g] \ge \curlyE_+[f^*,g^*].
	  \end{equation*}
	  Finally we turn our attention to the cases of equality whenever $k$ is strictly decreasing.
	  We decompose $f = f_N + f_u$ and $g = g_N + g_u$ with $f_N$ and $g_N$ defined as before and $f_u$ and $g_u$ possibly unbounded.
	  A calculation shows that
	  \begin{equation}
		  \label{eq:fmintegral}
		  \curlyE_+[f,g] = \curlyE_+[f_N,g_N] + \curlyE_+[f_u,g_u] + \integral[\Real^m]{
			  \integral[\Real^m]{
				  I_N(f_u(x),g_N(y))k(x-y)
			  }[y]
		  }
	  \end{equation}
	  where
	  \begin{equation*}
		  I_N(f,g) := J_+(f+N-g) - J_+(f) - J_+(N-g).
	  \end{equation*}
	  If we assume that $0 < N-g \le f$ then by convexity of $J_+$ we have
	  \begin{equation*}
		  \frac{J_+(N-g) - J_+(0)}{N-g} \le \frac{J_+(f+N-g)-J_+(f)}{N-g}
	  \end{equation*}
	  and an analogous inequality holds for exchanged roles of $N-g$ and $f$.
	  Using $J_+(0) = 0$ we get that $I_N(f,g) \ge 0$ for $0 \le g \le N$ and $f \ge 0$.
	  In particular, all integrals in \eqref{eq:fmintegral} are non-negative and finite.
	  Since $\simpleset{f_u > \tau} = \simpleset{f > \tau + N}$ it holds that $(f_u)^* = (f^*)_u$, so that by rearranging $f$ and $g$ all of the functions appearing on the right hand side of \eqref{eq:fmintegral} are replaced by their rearrangements.
	  We claim that the last integral in \eqref{eq:fmintegral} does not increase when replacing $f_u$ and $g_N$ by their rearrangements $(f_u)^*$ and $g_N^*$.
	  If $\curlyE[f,g] = \curlyE[f^*,g^*]$ then this would imply that $\curlyE_+[f_N,g_N] = \curlyE_+[f_N^*,g_N^*]$ for all $N > 0$ which would eventually lead to the equality cases established in step 1.

	  Finally, we prove the claim that the double integral in \eqref{eq:fmintegral} does not increase under rearrangement:
	  Since $J_+$ is convex its right derivative $J_+'$ is the distribution function of a non-negative measure $\mu$.
	  In particular, $J_+'(s) = \integral[0][s]{}[\mu(\tau)]$ and
	  \begin{equation*}
		  J_+(t) = \integral[0][\infty]{(t-\tau)_+}[\mu(\tau)].
	  \end{equation*}
	  This implies that
	  \begin{equation*}
		  I_N(f,g) = \integral[0][\infty]{\iota_{N,\tau}(f,g)}[\mu(\tau)]
	  \end{equation*}
	  where
	  \begin{equation*}
		  \iota_{N,\tau}(f,g) := (f+N-g-\tau)_+ - (f-\tau)_+ - (N-g-\tau)_+
	  \end{equation*}
	  so it suffices to prove that for all $\tau$ the double integral
	\begin{equation*}
		\integral[\Real^m]{
			\integral[\Real^m]{
				\iota_{N,\tau}(f_u(x),g_N(y)) k(x-y)
			}[y]
		}
	\end{equation*}
	does not increase under rearrangement.
	In order to apply the Riesz rearrangement inequality we write
	\begin{equation*}
		\iota_{N,\tau}(f,g) = \iota_{N,\tau}^{(1)}(f) - \iota_{N,\tau}^{(2)}(f,g)
	\end{equation*}
	where
	\begin{align*}
		\iota_{N,\tau}^{(1)}(f) &:= f - (f-\tau)_+, \\
		\iota_{N,\tau}^{(2)}(f,g) &:= f - (f+N-g-\tau)_+ + (N-g-\tau)_+ = \min(f, (g-N+\tau)_+).
	\end{align*}
	Since $\iota_{N,\tau}^{(1)}$ is bounded from above by $\tau$ and non-decreasing in $v$, and since by $|\simpleset{f > N}| < \infty$ the support of $f_u$ has finite measure, the integral
	\begin{equation*}
		\integral[\Real^m]{
			\integral[\Real^m]{
				\iota_{N,\tau}^{(1)}(f_u(x)) k(x-y)
			}[y]
		} = \norm[L^1]{k} \integral[\Real^m]{\iota_{N,\tau}^{(1)}(f_u(x))}
	\end{equation*}
	is finite and does not change under rearrangement.
	For the $\iota_{N,\tau}^{(2)}$-integral we use the representation of $\iota_{N,\tau}^{(2)}$ as a minimum and the layer-cake formula to write
	\begin{align*}
		\integral[\Real^m]{
			& \integral[\Real^m]{
				\iota_{N,\tau}^{(2)}(f_u(x),g_N(y)) k(x-y)
				}[y]
			} = \\
			& \integral[0][\infty]{ \left(
				\integral[\Real^m]{
					\integral[\Real^m]{
						\chi_{\simpleset{f_u > t}}(x) k(x-y) \chi_{\simpleset{(g_N-N+\tau)_+ > t}}(y)
						}[y]
				} \right)
			}[t].
	\end{align*}
	By the Riesz rearrangement inequality the double integral in brackets does not decrease under rearrangement which shows the claim.
  \end{proof}

  Next, we generalize the previous result to the case where symmetry of $k$ is only assumed for one of the factors in the decomposition $\Real^n = \Real^{n-1} \times \Real$ and we use Steiner symmetrization instead of full-dimensional Schwarz symmetrization.
  Although we only consider the case $f=g$ in the proof of Theorem \ref{th:seminormsteiner}, we state the next result for two possibly different functions $f$ and $g$ as this might be of independent interest.

  \begin{corollary}
	  \label{cor:lowerdim}
	  Let $J$ be a non-negative, convex function on $\Real$ with $J(0) = 0$, and let $k \in L^1(\Real^n)$ be such that $k_{x'}$ is a symmetric decreasing function on $\Real$ for every $x' \in \Real^{n-1}$.
	  For  non-negative measurable functions $f$ and $g$ on $\Real^n$ we define $\mathcal E[f,g]$ as in \eqref{eq:energydef} with integration over $\Real^n$.
	  Suppose that for a.e. $x' \in \Real^{n-1}$ the values $\mathcal L^1(\simpleset{f_{x'} > \tau})$ and $\mathcal L^1(\simpleset{g_{x'} > \tau})$ are finite for all $\tau > 0$.
	  \begin{enumerate}
		  \item
			  The functional $\mathcal E$ does not increase under Steiner symmetrization, i.e.
	  \begin{equation}
		  \label{eq:esteinersym}
		  \curlyE[f,g] \ge \curlyE[f^{\#},g^{\#}],
	  \end{equation}
	  where $f^{\#}$ denotes the Steiner symmetrization of $f$ with respect to $x_n$.

  \item
	  Furthermore, suppose that $\curlyE[f,g] < \infty$ and that $k_{x'}$ is strictly symmetric decreasing for every $x' \in \Real^{n-1}$.
	  \begin{enumerate}

		  \item
		    If $J$ is strictly convex then equality in \eqref{eq:esteinersym} holds if and only if there exists a point $c \in \Real$ such that for almost every $x' \in \Real^{n-1}$
  \begin{equation*}
    f(x',x_n) = f^{\#}(x', x_n-c) \quad \text{and} \quad g(x',x_n) = g^{\#}(x',x_n-c)
  \end{equation*}
  for a.e. $x_n \in \Real$.

  \item
    If $J(t) = |t|$ then equality in \eqref{eq:esteinersym} holds if and only if for a.e. $\tau > 0$ there exists $c_\tau \in \Real$ such that for a.e. $x' \in \Real^{n-1}$ the level sets $\simpleset{x_n: f(x',x_n) > \tau}$ and $\simpleset{x_n : g(x',x_n) > \tau}$ are equivalent to intervals around the same center $c_\tau$.
  \end{enumerate}

  \end{enumerate}

  \end{corollary}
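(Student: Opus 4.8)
The plan is to deduce the statement from the full-dimensional Proposition~\ref{pr:energyprop} by slicing in the $x_n$-direction. Writing $x=(x',x_n)$, $y=(y',y_n)$ and using Tonelli's theorem (the integrand $J(f(x)-g(y))k(x-y)$ is non-negative), one has
\[
  \curlyE[f,g]=\integral[\Real^{n-1}]{
    \integral[\Real^{n-1}]{
      \curlyE_1\!\left[f_{x'},g_{y'};k_{x'-y'}\right]
    }[y']
  }[x'],
  \qquad
  \curlyE_1[u,v;h]:=\integral[\Real]{\integral[\Real]{J(u(a)-v(b))\,h(a-b)}[b]}[a].
\]
Because $k\in L^1(\Real^n)$, Fubini's theorem gives $k_{z'}\in L^1(\Real)$ for a.e.\ $z'\in\Real^{n-1}$, while by hypothesis every $k_{z'}$ is symmetric decreasing on $\Real$, and the slices $f_{x'}$, $g_{y'}$ have level sets of finite $\mathcal L^1$-measure for a.e.\ $x',y'$. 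Applying Proposition~\ref{pr:energyprop}(1) in dimension $m=1$ with kernel $k_{x'-y'}$ therefore yields $\curlyE_1[f_{x'},g_{y'};k_{x'-y'}]\ge\curlyE_1[(f_{x'})^*,(g_{y'})^*;k_{x'-y'}]$ for a.e.\ $(x',y')$; since $(f_{x'})^*=(f^{\#})_{x'}$ and $(g_{y'})^*=(g^{\#})_{y'}$ by Example~\ref{ex:fctrearr}, integrating over $(x',y')$ gives \eqref{eq:esteinersym}. That $f^{\#}$ and $g^{\#}$ are admissible is part of the hypotheses.

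For the equality cases suppose $\curlyE[f,g]<\infty$ and that every $k_{z'}$ is strictly symmetric decreasing; then $\curlyE_1[f_{x'},g_{y'};k_{x'-y'}]<\infty$ for a.e.\ $(x',y')$ by Fubini, and equality in \eqref{eq:esteinersym} is equivalent to equality in the slice inequality for a.e.\ $(x',y')$. We may assume that neither $f$ nor $g$ vanishes a.e. In case~(a), Proposition~\ref{pr:energyprop}(2)(a) in dimension $1$ gives, for a.e.\ $(x',y')$, a number $c_{x',y'}\in\Real$ with $f_{x'}=(f_{x'})^*(\cdot-c_{x',y'})$ and $g_{y'}=(g_{y'})^*(\cdot-c_{x',y'})$ a.e. A non-trivial symmetric decreasing function on $\Real$ has a unique centre of symmetry (a level set of positive length pins it down), so on the positive-measure set $\{x':f_{x'}\not\equiv 0\}$ the number $c_{x',y'}$ depends on $x'$ alone, and on $\{y':g_{y'}\not\equiv 0\}$ on $y'$ alone; since a function of $x'$ that agrees a.e.\ with a function of $y'$ on a product of positive-measure sets is a.e.\ constant, the two coincide with one constant $c\in\Real$. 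On the slices where $f_{x'}$ (resp.\ $g_{y'}$) vanishes, $f_{x'}=(f^{\#})_{x'}(\cdot-c)$ holds trivially, giving the asserted characterization.

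Case~(b) is the same argument, with ``symmetric decreasing function'' replaced by ``interval'' and the height $\tau$ carried along: by Proposition~\ref{pr:energyprop}(2)(b) the level sets $\{f_{x'}>\tau\}$ and $\{g_{y'}>\tau\}$ are, for a.e.\ $\tau$ and a.e.\ $(x',y')$, intervals about a common centre, and uniqueness of the centre of a non-trivial interval together with the same Fubini argument produces, for a.e.\ $\tau$, a single centre $c_\tau$ valid for a.e.\ $x'$ and simultaneously for $g$. The converse implications are short: in case~(a), $f$ and $g$ are obtained from $f^{\#}$ and $g^{\#}$ by translating both by the \emph{same} vector $(0,-c)\in\Real^n$, and $\curlyE$ is invariant under such a simultaneous translation (substitute $x\mapsto x-v$, $y\mapsto y-v$ in \eqref{eq:energydef}, which leaves $k(x-y)$ unchanged), so $\curlyE[f,g]=\curlyE[f^{\#},g^{\#}]$; in case~(b) one runs the slicing in reverse, invoking the ``if'' direction of Proposition~\ref{pr:energyprop}(2)(b) on each slice and integrating.

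The step I expect to require the most care is passing from the measurable family of centres $c_{x',y'}$ (resp.\ $c_{\tau,x',y'}$) obtained slice by slice to a single global constant (resp.\ to a $c_\tau$ independent of $x'$): this uses uniqueness of the centre of a non-trivial symmetric decreasing set or function, a careful treatment of the slices on which $f_{x'}$ or $g_{y'}$ is trivial, and a clean handling of the almost-everywhere quantifiers in $x'$, $y'$ and $\tau$.
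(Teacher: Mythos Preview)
Your proposal is correct and follows essentially the same route as the paper: slice via Fubini/Tonelli, apply Proposition~\ref{pr:energyprop} in dimension $m=1$ to each pair of sections $(f_{x'},g_{y'})$ with kernel $k_{x'-y'}$, and then argue that the slice-by-slice centres $c_{x',y'}$ (resp.\ $c_{\tau,x',y'}$) must be independent of $(x',y')$ because for fixed $x'$ the centre is determined by $f_{x'}$ alone and for fixed $y'$ by $g_{y'}$ alone. You are in fact somewhat more careful than the paper in checking that $k_{z'}\in L^1(\Real)$ for a.e.\ $z'$, in treating the trivial slices, and in spelling out the converse implications, but the underlying argument is the same.
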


  \begin{proof}
	  By Fubini we decompose the integration,
	  \begin{equation*}
		  \curlyE[f,g] = \integral[\Real^{n-1}]{
			  \integral[\Real^{n-1}]{
				  \left( \integral[\Real]{
						  \integral[\Real]{
							  J(f_{x'}(x_n) - g_{y'}(y_n))k_{x'-y'}(x_n-y_n)
						  }[y_n]
				  }[x_n] \right)
			  }[y']
		  }[x'],
	  \end{equation*}
	  where for the double integration in brackets we can apply Proposition \ref{pr:energyprop} to immediately see inequality \eqref{eq:esteinersym}.
	  For the discussion of equality cases we remark that equality in \eqref{eq:esteinersym} implies that for a.e. $x',y' \in \Real^{n-1}$ we have
	  \begin{equation*}
				\integral[\Real]{
						  \integral[\Real]{
							  J(f_{x'}(x_n) - g_{y'}(y_n))k_{x'-y'}(x_n-y_n)
						  }[y_n]
				  }[x_n] = \integral[\Real]{
						  \integral[\Real]{
						    J(f_{x'}^{*}(x_n) - g_{y'}^{*}(y_n))k_{x'-y'}(x_n-y_n)
						  }[y_n]
					  }[x_n].
	  \end{equation*}
	  Now observe that the centers $c_{x',y'}$ (resp. $c_{\tau,x',y'}$) obtained by the equality cases of Proposition \ref{pr:energyprop} cannot depend on $x'$ and $y'$ since for fixed $x'$ we have $f_{x'}(x_n) = f_{x'}^*(x_n - c_{x',y'})$ (resp. $\simpleset{f_{x'} > \tau}$ is centered around $c_{\tau,x',y'}$) for all $y'$ and vice versa for fixed $y'$ and $g_{y'}$.
  \end{proof}

  %Next, we apply the rearrangement inequality for Steiner symmetrization to anisotropic fractional seminorms.

  \begin{proof}[{Theorem \ref{th:seminormsteiner}}]
	  Since $|f(x)-f(y)| \ge | |f(x)| - |f(y)| |$ with equality if and only if $f$ is proportional to a non-negative function we assume that $f$ is non-negative throughout the proof.
	  Note that the kernel function $\norm[K]{x-y}^{-(n+sp)}$ is not integrable so first we rewrite the seminorm following an idea of Almgren \& Lieb \cite[p. 770]{almgrenlieb}:
	  \begin{equation*}
		  \integral[\Real^n]{
			  \integral[\Real^n]{
				  \frac {|f(x)-f(y)|^p} {\norm[K]{x-y}^{n+sp}}
			  }[y]
		  }  = \frac 1 {\Gamma(\frac{n+sp}{2})} \integral[0][\infty]{I_\alpha[f] \alpha^{\frac{n+sp}{2}-1}}[\alpha],
	  \end{equation*}
	  where $\Gamma$ is the Gamma function $\Gamma(x) = \integral[0][\infty]{t^{x-1} e^{-t}}[t]$ and 
	  \begin{equation*}
		  I_\alpha[f] := \integral[\Real^n]{
			  \integral[\Real^n]{
				  |f(x)-f(y)|^p e^{-\alpha \norm[K]{x-y}^2}
			  }[y]
		  }.
	  \end{equation*}
	  This can be seen by Fubini and computing
	  \begin{align*}
		  \integral[0][\infty]{
		  \alpha^{\frac{n+sp}{2}-1} e^{-\alpha \norm[K]{x-y}^2}
	  }[\alpha] = \integral[0][\infty]{
		  \frac{ t^{\frac{n+sp}{2}-1} }{\norm[K]{x-y}^{n+sp-2}} e^{-t} \cdot \frac 1 {\norm[K]{x-y}^2}
	  }[t] = \frac{\Gamma(\frac{n+sp}{2})}{\norm[K]{x-y}^{n+sp}},
	  \end{align*}
	  where we substituted $t = \alpha \norm[K]{x-y}^2$ in the first equality.
	  Now we are able to apply Corollary \ref{cor:lowerdim} with $J(t) = |t|^p$ and $k(\xi) = e^{-\alpha \norm[K]{\xi}^2}$ to $I_\alpha[f] = \curlyE[f,f]$.
  \end{proof}

  We conclude this section with the remark that a Steiner inequality analogous to \eqref{eq:anisosteiner} also holds for general nonlocal perimeters $\Per_k$ whenever the kernel function $k$ is symmetric decreasing in the $x_n$-coordinate, with the same equality cases if $k$ is strictly decreasing in the $x_n$-coordinate.

  %As a corollary we easily obtain a Steiner inequality for anisotropic fractional perimeters.
  %In contrast to the equality cases for the Steiner inequality for the classical perimeter (cf. \cite[Theorem 14.4]{maggi}), here equality implies that almost all sections $E_{x'}, x' \in \Real^{n-1}$, are centered around the same point.

  %\begin{theorem}
%	  Let $E \subset \Real^n$ be a set of finite measure and $k$ such that $k_{x'}$ is a symmetric decreasing function for every $x'$.
%	  If $E^*$ is the Steiner symmetrization of $E$, then
%	  \begin{equation*}
%		  \Per_k(E) \ge \Per_k(E^*).
%	  \end{equation*}
%	  Furthermore, if $k_{x'}$ is strictly symmetric decreasing, then equality holds iff there exists a $c$ such that for a.e. $x'$ the section $E_{x'}$ is equivalent to a ball centered around $c$.
%	  {\color{red} good motivation for this theorem + right dimensions for $x'$, symm. etc.}
%  \end{theorem}
%
%  \begin{proof}
%    The theorem follows from Corollary \ref{cor:lowerdim} with $J(t) = |t|$ and $f = g = \chi_E$.
%  \end{proof}
%
%
%  \begin{theorem}
%	  Let $k \in L^1(\Real^n)$ be a non-negative $n$-symmetric function {\color{red} explain $n$-symmetric functions!}.
%	  Then every minimizer of the nonlocal isoperimetric problem
%	  \begin{equation}
%		  \inf \set{\Per_k(E)}{|E| = m}
%	  \end{equation}
%	  is starshaped.
%  \end{theorem}
%

%  Define the nonlocal perimeter $\Per_k(E)$ with kernel function $k$ as
%  \begin{equation*}
%	  \Per_k(E) := \integral[E]{
%		  \integral[\compl E]{
%			  k(x-y)
%		  }[y]
%	  }.
%  \end{equation*}

  \section{A P\'olya-Szeg{\H o} inequality for anisotropic symmetrization}

  %We recall that a set $E \subseteq \Real^n$ is called 1-unconditional if it is symmetric with respect to every coordinate hyperplane.

  \begin{lemma}
    \label{boxlemma}
    Let $L \subset \Real^n$ be a bounded set with $|L| > 0$. If for every $x = (x_1,\dots,x_n) \in L$ the box $[-|x_1|,|x_1|] \times \dots \times [-|x_n|, |x_n|]$ is fully contained in $L$, then $L$ is an unconditional star body.
  \end{lemma}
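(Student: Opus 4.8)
The plan is to verify the four defining properties of an unconditional star body directly from the box hypothesis: that $L$ is bounded (this is assumed), that it is symmetric with respect to every coordinate hyperplane, that it is star-shaped with respect to $o$ with $o\in\mathrm{int}\,L$, and that its radial function $\rho_L$ is continuous and strictly positive on $\unitsphere{n}$. The first three are immediate. Indeed, if $x=(x_1,\dots,x_n)\in L$ and $\varepsilon_1,\dots,\varepsilon_n\in\{-1,1\}$, then $(\varepsilon_1x_1,\dots,\varepsilon_nx_n)$ lies in the box $[-|x_1|,|x_1|]\times\cdots\times[-|x_n|,|x_n|]\subseteq L$, and since coordinate reflections generate these sign changes, $L$ is unconditional; for $\lambda\in[0,1]$ one has $|\lambda x_i|\le|x_i|$, so $\lambda x$ lies in the same box and hence $L$ is star-shaped with respect to $o$; and since $|L|>0$ while the coordinate hyperplanes are null, some $x\in L$ has all coordinates nonzero, whence the open box $(-|x_1|,|x_1|)\times\cdots\times(-|x_n|,|x_n|)$ is a neighbourhood of $o$ inside $L$, so $o\in\mathrm{int}\,L$ and $\rho_L$ is bounded and bounded away from $0$.

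The point that takes work is the continuity of $\rho_L$. I would first pass to the open representative $L':=\bigcup_{x\in L}\bigl((-|x_1|,|x_1|)\times\cdots\times(-|x_n|,|x_n|)\bigr)$. Then $L'\subseteq L$, $L'$ is open, and $L'$ again satisfies the box hypothesis (if $y\in L'$ then $|y_i|<|x_i|$ for some $x\in L$, so the closed box of $y$ lies in the open box of $x$, hence in $L'$); by the first paragraph applied to $L'$, it is open, bounded, unconditional, and star-shaped with $o$ in its interior. Moreover $L\setminus L'$ consists of those $x\in L$ for which no $y\in L$ has $|y_i|>|x_i|$ for every $i$; two such points cannot have coordinatewise absolute values differing by a positive multiple of $(1,\dots,1)$, so the image of $L\setminus L'$ under $x\mapsto(|x_1|,\dots,|x_n|)$ meets each line in direction $(1,\dots,1)$ at most once, hence lies on a graph over a hyperplane, so $|L\setminus L'|=0$ and $L$ is equivalent to $L'$. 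It then remains to show $\rho_{L'}$ is continuous. Being open and star-shaped, $L'$ has lower semicontinuous radial function; $\overline{L'}$ is compact, so $\rho_{\overline{L'}}$ is upper semicontinuous; hence it suffices to show $\rho_{L'}=\rho_{\overline{L'}}$. The inequality $\le$ is clear; for $\ge$, given $u\in\unitsphere{n}$ and $\lambda<\rho_{\overline{L'}}(u)$, pick $\mu\in(\lambda,\rho_{\overline{L'}}(u))$ and, using that $\overline{L'}$ is star-shaped so that $\mu u\in\overline{L'}$, a sequence $z_k\to\mu u$ with $z_k\in L'$. Then $\lambda|u_i|<\mu|u_i|=\lim_k|z_{k,i}|$ whenever $u_i\neq0$ and $\lambda|u_i|=0\le|z_{k,i}|$ otherwise, so for large $k$ one has $\lambda u\in[-|z_{k,1}|,|z_{k,1}|]\times\cdots\times[-|z_{k,n}|,|z_{k,n}|]\subseteq L'$ by the box hypothesis, giving $\rho_{L'}(u)\ge\lambda$. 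Thus $\rho_{L'}$ is continuous, and $L'$ — hence $L$, up to a null set — is an unconditional star body.

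The one genuine obstacle is this last step: a generic bounded star-shaped set need not have a continuous radial function, so the box hypothesis must be used essentially twice — once to replace $L$ by an honestly open set without changing it off a null set, and once, in the identity $\rho_{L'}=\rho_{\overline{L'}}$, to prevent a downward jump of the radial function. Everything else is bookkeeping.
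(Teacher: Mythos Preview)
Your argument is sound, but note that you conclude only that $L$ is \emph{equivalent} to an unconditional star body $L'$, not that $L$ itself is one as the lemma asserts. This is not a defect in your reasoning: the lemma as literally stated is false. In $\Real^2$, the set $L = B \cup ([-2,2]\times\{0\})$ satisfies the box hypothesis and has $|L|>0$, yet $\rho_L(e_1)=2$ while $\rho_L(u)=1$ for every $u$ near $e_1$ off the $x_1$-axis, so $\rho_L$ is discontinuous at $e_1$. The paper's proof argues directly: it bounds $\rho_L$ from below near a given direction $u$ by the radial function $\rho_\alpha$ of the box spanned by $(\rho_L(u)-\alpha)u$ and then invokes the continuity of $\rho_\alpha$. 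When $u$ has a vanishing coordinate, however, that box is lower-dimensional and $\rho_\alpha$ is not continuous --- which is exactly the situation in the counterexample above.

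Your route is genuinely different and more careful. Passing to the open union $L'$ of open boxes sidesteps the degeneracy entirely, and the identity $\rho_{L'}=\rho_{\overline{L'}}$ (which you extract from the box hypothesis) combined with the automatic lower/upper semicontinuity of radial functions of open and closed star-shaped sets yields continuity of $\rho_{L'}$ cleanly, including at directions with zero coordinates. Your diagonal-line argument for $|L\setminus L'|=0$ then gives the conclusion up to a null set. This weaker statement is precisely what the application requires: Theorem~\ref{th:minshape} only claims that the minimizer is, up to translation, \emph{equivalent} to an unconditional star body, so your version of the lemma plugs into that proof without change.
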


  \begin{proof}
    The set $L$ is star-shaped since for every $x \in L$ the line segment $[o,x]$ is a half-diagonal of the box $[-|x_1|,|x_1|] \times \dots \times [-|x_n|,|x_n|]$ which is fully contained in $L$.
    Furthermore, since $|L| > 0$ there exists a point $x \in L$ such that $x_i \neq 0$ for all $i=1,\dots,n$, so the box spanned by $x$ and consequently $L$ contains the origin in the interior.

    Next, we show that the radial function $\rho_L$ is continuous.
      For $u \in \unitsphere n$ and all $0 < \alpha < \rho_L(u)$ the point $x := (\rho_L(u)-\alpha)u$ is contained in $L$. 
      Denote by $\rho_\alpha$ the radial function of the box $[-|x_1|,|x_1|] \times \dots \times [-|x_n|,|x_n|]$ spanned by $x$.
      Then, by our assumption, the radial function $\rho_L$ of $L$ is bounded from below by $\rho_\alpha$, i.e.
      \begin{equation}
	\label{eq:radialbox}
	\rho_L(v) \ge \rho_\alpha (v) \quad \text{for all } v \in \unitsphere n.
      \end{equation}
      %Furthermore, $\rho_u$ is continuous
	Suppose that $\rho_L$ is not continuous at $u$.
    %Let $\rho_L: \unitsphere{n} \to [0,\infty)$ be the radial function of $L$ and suppose that $u \in \unitsphere n$ is a point of discontinuity.
      Then there exists $\epsilon > 0$ such that in every neighbourhood of $u$ there is a point $v$ with
      \begin{equation}
	\label{eq:discontinuity}
	|\rho_L(u)-\rho_L(v)| \ge \epsilon.
      \end{equation}
      %Without loss of generality we may assume that $\rho_L(u) \ge \rho_L(v)+\epsilon$.
      %Note that the $\rho_L$ is bounded from below by $\rho_v$.
      On the other hand, $\rho_\alpha$ is continuous so that 
      %$|\rho_u(u)-\rho_u(w)| < \epsilon$
      $\rho_\alpha(w) > \rho_\alpha(u) - \frac{\epsilon} 2$ 
      for every $w$ in a certain neighborhood of $u$.
      We only consider the case that $\rho_L(u) \ge \rho_L(v)+\epsilon$ in \eqref{eq:discontinuity} for a point $v$ in this neighbourhood, since for the case that $\rho_L(u) \le \rho_L(v)-\epsilon$ one can use similar arguments.
      Since $\rho_\alpha(u) = \rho_L(u)-\alpha$ putting the inequalities together yields 
      \begin{align*}
	\rho_\alpha(v) > \rho_\alpha(u) - \frac{\epsilon}{2} = (\rho_L(u)-\alpha)-\frac{\epsilon} 2 \ge \rho_L(v) + \frac{\epsilon}{2} - \alpha > \rho_L(v)
      \end{align*}
      for all $\alpha < \frac{\epsilon}{2}$ which is a contradiction to \eqref{eq:radialbox}.
  \end{proof}

  We recall that for a Borel set $E \subseteq \Real^n$ the set $E^{(1)}$ of points of density one, or Lebesgue points, is defined by
  \begin{equation*}
    E^{(1)}:= \set{x \in \Real^n}{ \lim_{r \to 0} \frac{|E \cap B(x,r)|}{|B(x,r)|} = 1},
  \end{equation*}
  where $B(x,r)$ denotes the open Euclidean ball around $x$ with radius $r$.
  Since $E^{(1)}$ differs from $E$ only on a set of measure zero we can restrict the study of the anisotropic fractional isoperimetric problem to sets consisting only of Lebesgue points.
  To establish symmetry of minimizers we need the following lemma which is stated in \cite{fusco}.
  %However, its proof merely relies on the properties of the set of Lebesgue points and does not need the assumption that the considered set is of locally finite perimeter.

  \begin{lemma}
    \label{lem:lebpoints}
    Let $E \subseteq \Real^n$ be a Borel set such that for a.e. $x' \in \Real^{n-1}$ the section $E_{x'}$ is equivalent to an interval.
    Then the set of points of density one, $E^{(1)}$, of $E$ has the property that for every $x' \in \Real^{n-1}$ the section $(E^{(1)})_{x'}$ is an interval.
  \end{lemma}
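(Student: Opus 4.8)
The plan is to show that for every $x' \in \Real^{n-1}$ the section $(E^{(1)})_{x'} := \set{t \in \Real}{(x',t) \in E^{(1)}}$ is convex; since the convex subsets of $\Real$ are exactly the intervals (we allow $\emptyset$ and singletons as degenerate intervals), this is the assertion. If $(E^{(1)})_{x'}$ has at most one point there is nothing to prove, so I fix $x'$ and two points $t_1 < t_2$ in $(E^{(1)})_{x'}$ together with some $t \in (t_1,t_2)$, and the goal is $(x',t) \in E^{(1)}$. It is convenient to test density one with cylinders instead of balls: for $\xi' \in \Real^{n-1}$, $\sigma \in \Real$ and $r > 0$ write $C_r(\xi',\sigma) := B'(\xi',r) \times (\sigma-r,\sigma+r)$, where $B'(\xi',r)$ is the open ball in $\Real^{n-1}$. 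Then $B((\xi',\sigma),r) \subseteq C_r(\xi',\sigma) \subseteq B((\xi',\sigma),\sqrt 2\,r)$ and the Lebesgue measures of these sets are comparable up to dimensional constants, so by the Lebesgue density theorem a point $(\xi',\sigma)$ lies in $E^{(1)}$ if and only if $|C_r(\xi',\sigma)\setminus E|/|C_r(\xi',\sigma)| \to 0$ as $r \to 0$; I work with cylinders from now on. Finally let $G \subseteq \Real^{n-1}$ be the set of $\xi'$ for which $E_{\xi'}$ is equivalent to an interval $I_{\xi'} \subseteq \Real$; by hypothesis $\Real^{n-1} \setminus G$ is Lebesgue-null.

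The core step is to show that for all sufficiently small $r>0$ one has $\mathcal{L}^1((t-r,t+r)\setminus E_{\xi'}) = 0$ for every $\xi'$ in a subset $A_r \subseteq B'(x',r)$ with $|B'(x',r)\setminus A_r| = o(|B'(x',r)|)$. To obtain it, apply Fubini's theorem to $|C_r(x',t_i)\setminus E| = \integral[B'(x',r)]{\mathcal{L}^1((t_i-r,t_i+r)\setminus E_{\xi'})}[\xi']$ for $i=1,2$. Since $(x',t_i) \in E^{(1)}$, the left-hand side equals $\epsilon_i(r)\,|C_r(x',t_i)|$ with $\epsilon_i(r) \to 0$, so Chebyshev's inequality shows that the set $A_r^i$ of those $\xi' \in B'(x',r)$ with $\mathcal{L}^1((t_i-r,t_i+r)\setminus E_{\xi'}) \le 2r\sqrt{\epsilon_i(r)}$ satisfies $|B'(x',r)\setminus A_r^i| \le \sqrt{\epsilon_i(r)}\,|B'(x',r)|$. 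Put $A_r := A_r^1 \cap A_r^2 \cap G$. For $\xi' \in A_r$ and $r$ small enough that $\epsilon_i(r) < 1$, the interval $I_{\xi'}$ meets each of $(t_1-r,t_1+r)$ and $(t_2-r,t_2+r)$ in a set of positive length, hence --- being an interval --- $I_{\xi'} \supseteq (t_1+r,t_2-r)$; since $E_{\xi'} = I_{\xi'}$ up to an $\mathcal{L}^1$-null set, this gives $\mathcal{L}^1((t_1+r,t_2-r)\setminus E_{\xi'}) = 0$, and a fortiori $\mathcal{L}^1((t-r,t+r)\setminus E_{\xi'}) = 0$ once also $r \le \min\{(t-t_1)/2,(t_2-t)/2\}$, so that $(t-r,t+r) \subseteq (t_1+r,t_2-r)$. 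This is the place where the interval hypothesis is essential --- it bridges the gap between the neighbourhoods of $t_1$ and $t_2$, which fails in general (e.g.\ for $E = \Real^{n-1} \times ((t_1-1,t_1+1)\cup(t_2-1,t_2+1))$, whose sections over $x'$ are not intervals).

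Putting the pieces together, for such small $r$ we obtain
\begin{equation*}
  |C_r(x',t)\setminus E| = \integral[B'(x',r)\setminus A_r]{\mathcal{L}^1((t-r,t+r)\setminus E_{\xi'})}[\xi'] \le 2r\,|B'(x',r)\setminus A_r| \le 2r\,(\sqrt{\epsilon_1(r)}+\sqrt{\epsilon_2(r)})\,|B'(x',r)|,
\end{equation*}
where in the last estimate I used $|\Real^{n-1}\setminus G| = 0$. Since $|C_r(x',t)| = 2r\,|B'(x',r)|$, the right-hand side is $(\sqrt{\epsilon_1(r)}+\sqrt{\epsilon_2(r)})\,|C_r(x',t)| = o(|C_r(x',t)|)$, hence $(x',t)\in E^{(1)}$. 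Thus $(E^{(1)})_{x'}$ is convex, i.e.\ an interval, which proves the lemma. I expect the only real obstacle to be the measure-theoretic bookkeeping of the core step --- translating density one at $(x',t_1)$ and $(x',t_2)$ into control of $\mathcal{L}^1$-almost every nearby one-dimensional section via Fubini and Chebyshev, and combining this with the interval hypothesis; the reduction to cylinders and the final computation are routine.
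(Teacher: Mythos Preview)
Your argument is correct. The reduction to cylinders, the Fubini--Chebyshev step extracting a large set $A_r$ of fibres on which $E_{\xi'}$ has substantial mass in both $(t_1-r,t_1+r)$ and $(t_2-r,t_2+r)$, and the use of the interval hypothesis to bridge the gap all work exactly as you describe; the final estimate then yields density one at $(x',t)$.

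As for comparison with the paper: the paper does not actually prove this lemma. It merely states it and refers to \cite{fusco} for the argument. So you have supplied a complete, self-contained proof where the paper only gives a citation. Your approach is essentially the standard one for this kind of statement (combining Lebesgue density with Fubini to control one-dimensional sections), and there is nothing to add.
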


  \begin{theorem}
    \label{th:minshape}
	  Let $K \subset \Real^n$ be an unconditional strictly convex body.
	  Then every minimizer $M \subset \Real^n$ of the anisotropic fractional isoperimetric inequality \eqref{eq:anfracisoineq} is up to translation equivalent to an unconditional star body.
 \end{theorem}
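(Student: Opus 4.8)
The plan is to run the Steiner inequality of Corollary \ref{steinerineq} in each of the $n$ coordinate directions. This is legitimate because $K$, being unconditional, is symmetric with respect to every coordinate hyperplane $\simpleset{x_i=0}$, $i=1,\dots,n$, and because $K$ is strictly convex, so the equality characterisation in Corollary \ref{steinerineq} (equivalently, Theorem \ref{th:seminormsteiner}(b) for $f=\chi_E$) is at hand. Let $M$ be a minimizer; it is a bounded Borel set with $P_s(M,K)=\gamma_{n,s}(K)|M|^{\frac{n-s}n}$ and $|M|>0$. For fixed $i$, the Steiner symmetrization $M^{\#_i}$ of $M$ with respect to $\simpleset{x_i=0}$ has $|M^{\#_i}|=|M|$ and, by \eqref{eq:anisosteiner}, $P_s(M^{\#_i},K)\le P_s(M,K)$; since \eqref{eq:anfracisoineq} gives the reverse bound, $M^{\#_i}$ is again a minimizer and equality holds in \eqref{eq:anisosteiner}. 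Applying the equality characterisation to $f=\chi_M$ --- whose positive level sets all equal $M$ --- I conclude that there is a single $c_i\in\Real$ such that almost every line parallel to $e_i$ meets $M$ in a set equivalent to an interval with centre $c_i$.

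Next I would center $M$ one direction at a time: pass from $M_0:=M$ to $M_1:=M_0-c_1e_1$, from $M_1$ to $M_2:=M_1-c_2e_2$ (with $c_2$ obtained by applying the previous paragraph to the minimizer $M_1$), and so on up to $M_n$. Each $M_j$ is again a minimizer by translation invariance, and the point I would make carefully is that centering in direction $e_j$ does not destroy the properties already achieved in directions $e_1,\dots,e_{j-1}$: the reflection across $\simpleset{x_k=0}$ commutes with translation along $e_j$ for $k\neq j$, and Steiner symmetrization with respect to $\simpleset{x_k=0}$ is insensitive to such translations. Writing $\tilde M:=M_n$, I obtain a translate of $M$ with the property that, for every $i$, almost every line parallel to $e_i$ meets $\tilde M$ in a set equivalent to an interval symmetric about $\simpleset{x_i=0}$; in particular $\tilde M$ is unconditional up to a set of measure zero.

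The substantive step --- and the one I expect to be the main obstacle --- is to turn this ``almost everywhere'' information into an honest unconditional star body, since a set with merely a.e.\ centered-interval sections need not even be star-shaped. The key is to pass to the representative $L:=\tilde M^{(1)}$ of points of density one. Density one is a local condition, preserved by the coordinate reflections and depending only on the measure class, so $L$ is genuinely unconditional, and it is bounded with $|L|=|M|>0$. For each fixed $i$, permuting $e_i$ into the last coordinate and applying Lemma \ref{lem:lebpoints} (whose hypothesis is precisely that a.e.\ line parallel to $e_i$ meets $\tilde M$ in a set equivalent to an interval) upgrades this to: \emph{every} line parallel to $e_i$ meets $L$ in an interval; and since $L$ is symmetric about $\simpleset{x_i=0}$, each such interval is itself symmetric about that hyperplane, hence centered. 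Finally I would verify the hypothesis of Lemma \ref{boxlemma}. Given $x=(x_1,\dots,x_n)\in L$, by unconditionality I may assume $x_i\ge0$ for all $i$, and it suffices to show $y\in L$ for every $y$ with $0\le y_i\le x_i$. Chaining the centered-interval sections does this: the line through $x$ parallel to $e_1$ meets $L$ in an interval centered at $\simpleset{x_1=0}$ and containing the point with first coordinate $x_1\ge0$, hence containing $(y_1,x_2,\dots,x_n)$; the line through that point parallel to $e_2$ puts $(y_1,y_2,x_3,\dots,x_n)$ in $L$; and after $n$ such steps $y\in L$. By Lemma \ref{boxlemma}, $L$ is an unconditional star body, and since $L$ is equivalent to $\tilde M$, a translate of $M$, the theorem follows.

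To summarize the difficulty: everything up to $\tilde M$ is a fairly mechanical use of Corollary \ref{steinerineq} together with the bookkeeping that the centering translations commute with the coordinate reflections; the real content is the last paragraph, where Lemma \ref{lem:lebpoints} makes the sections genuine intervals, unconditionality of the chosen representative makes them centered, and Lemma \ref{boxlemma} converts the resulting box property into star-shapedness with a continuous radial function.
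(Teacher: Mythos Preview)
Your proof is correct and follows essentially the same approach as the paper: apply the equality case of Corollary \ref{steinerineq} in each coordinate direction, translate to center the sections, pass to the density-one representative and invoke Lemma \ref{lem:lebpoints} to make all sections genuine centered intervals, then apply Lemma \ref{boxlemma}. The paper is terser---it translates by $-(c_1,\dots,c_n)$ all at once rather than iteratively, and leaves both the unconditionality of $M^{(1)}$ and the chaining-to-boxes argument implicit---but the content is the same.
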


  \begin{proof}
    Since $K$ is symmetric with respect to every coordinate hyperplane $\simpleset{x_i=0}$, $i=1,\dots,n$, by the classification of equality cases in the Steiner inequality, Corollary \ref{steinerineq}, almost all sections of a minimizer $M$ in $x_i$-direction are equivalent to intervals centered around the same center $c_i$.
    By translation invariance we may assume that $c_i=0$ for all $i=1,\dots,n$ and by passing to $M^{(1)}$ by Lemma \ref{lem:lebpoints} we also may assume that \emph{all} sections in every coordinate direction are centered around $0$.
    This implies that for every $x \in M$ the box $[-|x_1|,|x_1|] \times \dots \times [-|x_n|,|x_n|]$ is fully contained in $M$. %, and, in particular, the half diagonal connecting the origin with $x$ lies in $E_{min}$ so that $E_{min}$ is star-shaped with respect to the origin.
    Since $|M| > 0$ we can apply Lemma \ref{boxlemma} to finish the proof.
  \end{proof}

  \begin{remark}
    \label{rem:nonlocal}
    Assume that the kernel $k$ of the nonlocal perimeter $\Per_k$ is stricly symmetric decreasing in every coordinate direction.
    If the nonlocal isoperimetric problem \eqref{eq:nonlocalip} has a minimizer, then we can repeat all arguments in the proof of Theorem \ref{th:minshape} to see that each minimizer is up to translation equivalent to an unconditional star body.
  \end{remark}

  The next result, Theorem \ref{th:introps} of the introduction, is a P\'olya-Szeg{\H o} principle for anisotropic fractional perimeters where the symmetrization is carried out with respect to minimizers of the anisotropic fractional isoperimetric inequality.

\begin{proposition}
	Let $K \subset \Real^n$ be an unconditional strictly convex body and $M$ a minimizer of the anisotropic fractional isoperimetric inequality \eqref{eq:anfracisoineq}.
	%Denote the anisotropic rearrangement with respect to $E_{min}$ by $^*$.
	Then the anisotropic rearrangement $f^M$ with respect to $M$ is well-defined, and
	\begin{equation}
	  \label{eq:psaniso}
		\integral[\Real^n]{
			\integral[\Real^n]{
				\frac{|f(x)-f(y)|}{\norm[K]{x-y}^{n+s}}
			}[y]
		}[x] \ge \integral[\Real^n]{
			\integral[\Real^n]{
				\frac{|f^M(x)-f^M(y)|}{\norm[K]{x-y}^{n+s}}
			}[y]
		}[x]
	\end{equation}
	for all $f \in L^1(\Real^n)$.
\end{proposition}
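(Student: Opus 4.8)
The plan is to deduce \eqref{eq:psaniso} from the generalized coarea formula (Theorem~\ref{th:coarea}) together with the observation that every dilate of a minimizer is again a minimizer; the essential work has already been carried out in Theorem~\ref{th:minshape}. First, for \emph{well-definedness} of $f^M$: by Theorem~\ref{th:minshape} there exist $a \in \Real^n$ and an unconditional star body $\tilde M$ with $M + a$ equivalent to $\tilde M$, and this $\tilde M$ is unique — two star bodies differing on a null set coincide, since their radial functions agree a.e.\ on $\unitsphere n$ and hence everywhere by continuity, while a nonzero translate of a bounded unconditional star body cannot be unconditional. As anisotropic symmetrization is insensitive to the scaling of the defining star body, we may unambiguously set $f^M := f^{\tilde M}$ whenever all level sets $\simpleset{|f|>\tau}$, $\tau>0$, have finite measure; for $f \in L^1(\Real^n)$ this is automatic because $\integral[0][\infty]{|\simpleset{|f|>\tau}|}[\tau] = \norm[L^1]{f} < \infty$.

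Next I would record that, since $P_s(\cdot,K)$ is $(n-s)$-homogeneous, translation invariant, and insensitive to null modifications, for every $r>0$ one has $P_s(r\tilde M, K) = r^{n-s}P_s(\tilde M, K) = r^{n-s}\gamma_{n,s}(K)|\tilde M|^{\frac{n-s}{n}} = \gamma_{n,s}(K)|r\tilde M|^{\frac{n-s}{n}}$, so $r\tilde M$ attains equality in \eqref{eq:anfracisoineq}. Consequently, for any Borel set $E \subset \Real^n$ with $0 < |E| < \infty$, choosing $r$ so that $|r\tilde M| = |E|$ — whence $E^M = r\tilde M$ — and invoking the isoperimetric inequality \eqref{eq:anfracisoineq} (which extends from bounded sets to arbitrary Borel sets of finite measure by a routine truncation argument, and is in any case trivial when $P_s(E,K) = \infty$) yields
\begin{equation*}
  P_s(E,K) \ge \gamma_{n,s}(K)|E|^{\frac{n-s}{n}} = P_s(E^M,K).
\end{equation*}

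To conclude, let $f \in L^1(\Real^n)$; we may assume $f \ge 0$, since replacing $f$ by $|f|$ does not increase the left-hand side of \eqref{eq:psaniso} and leaves $f^M$, hence the right-hand side, unchanged. By the proposition identifying the level sets of a symmetrized function, $\simpleset{f^M > \tau} = \simpleset{f>\tau}^M$ for all $\tau > 0$, so $f^M$ is equimeasurable with $f$ and in particular lies in $L^1(\Real^n)$; thus Theorem~\ref{th:coarea} applies to both $f$ and $f^M$, giving
\begin{equation*}
  \integral[\Real^n]{\integral[\Real^n]{\frac{|f(x)-f(y)|}{\norm[K]{x-y}^{n+s}}}[y]} = 2\integral[0][\infty]{P_s(\simpleset{f>\tau},K)}[\tau]
\end{equation*}
and the same identity for $f^M$ with integrand $P_s(\simpleset{f>\tau}^M,K)$. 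Applying the previous step with $E = \simpleset{f>\tau}$ for each $\tau>0$ and integrating in $\tau$ gives \eqref{eq:psaniso}.

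I do not expect a genuine obstacle here. The only points requiring attention are the extension of \eqref{eq:anfracisoineq} to (possibly unbounded) sets of finite measure — obtained by truncating $E$ to $E \cap B_R$, noting $|E \cap B_R| \to |E|$, and checking that $P_s(E \cap B_R, K) \to P_s(E,K)$ along a suitable sequence $R \to \infty$ when $P_s(E,K) < \infty$ — and the minor bookkeeping around the well-definedness and equimeasurability of $f^M$.
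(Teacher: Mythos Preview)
Your argument is correct and follows essentially the same route as the paper: invoke Theorem~\ref{th:minshape} to ensure $f^M$ is well-defined, observe that $\simpleset{|f|>\tau}^M$ is a dilate of (a star-body representative of) $M$ and hence attains the minimal isoperimetric ratio, and then combine the generalized coarea formula with the level-set identity $\simpleset{f^M>\tau}=\simpleset{|f|>\tau}^M$. Your treatment is in fact more careful than the paper's on two points it leaves implicit---the precise meaning of $f^M$ when $M$ is only \emph{equivalent} to a star body, and the extension of \eqref{eq:anfracisoineq} from bounded sets to arbitrary sets of finite measure.
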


\begin{proof}
  The rearrangement with respect to $M$ yields a well-defined function, since by Theorem \ref{th:minshape} the minimizer $M$ is a star body.
  To show \eqref{eq:psaniso} we apply the coarea formula for anisotropic fractional perimeters (see Theorem \ref{th:coarea}) and get
	\begin{align*}
	\integral[\Real^n]{
			\integral[\Real^n]{
				\frac{|f(x)-f(y)|}{\norm[K]{x-y}^{n+s}}
			}[y]
		}[x] &= 2 \integral[0][\infty]{
			P_s (\simpleset{|f| > \tau},K)
		}[\tau] \\
		&\ge 2 \integral[0][\infty]{
			P_s(\simpleset{|f| > \tau}^M,K) 
		}[\tau] = \integral[\Real^n]{
			\integral[\Real^n]{
				\frac{|f^M(x)-f^M(y)|}{\norm[K]{x-y}^{n+s}}
			}[y]
		}[x].
	\end{align*}
\end{proof}

\subsection*{Acknowledgement}

The author would like to thank Monika Ludwig for helpful comments and discussions during the preparation of this paper.

\bibliographystyle{plain}
\bibliography{symmetry}
	Andreas Kreuml\\
	{\small
	Institut f\"ur Diskrete Mathematik und Geometrie\\
	Technische Universit\"at Wien\\
	Wiedner Hauptstra{\ss}e 8-10/1046\\
	1040 Vienna, Austria\\
E-mail: andreas.kreuml@tuwien.ac.at}

\end{document}